\documentclass[11pt]{amsart}
\usepackage[utf8]{inputenc}
\usepackage[normalem]{ulem}
\usepackage{mathtools}
\usepackage{amssymb, amsmath, enumerate, amsthm, cite, colonequals, stmaryrd, lipsum, graphicx, fancyvrb,fancyhdr, indentfirst, setspace}
\usepackage[margin=1in]{geometry}
\usepackage{mathtools}
\singlespacing

\makeatletter
\@namedef{subjclassname@2020}{\textup{2020} Mathematics Subject Classification}
\makeatother

\usepackage{hyperref}
\setcounter{tocdepth}{1}

\hypersetup{pdfpagemode=UseNone}
\sloppy

\newcommand{\prj}{\operatorname{pr}}
\newcommand{\res}{\operatorname{res}}
\newcommand{\NestHilb}{X^{[n,n+1]}}
\newcommand{\diff}{\operatorname{diff}}

\usepackage[capitalise]{cleveref}
\crefformat{equation}{(#2#1#3)}
\crefrangeformat{equation}{(#3#1#4--#5#2#6)}
\crefformat{enumi}{(#2#1#3)}
\crefrangeformat{enumi}{(#3#1#4--#5#2#6)}

\usepackage[pagewise]{lineno}
\overfullrule = 100pt
\let\oldequation\equation
\let\oldendequation\endequation
\renewenvironment{equation}{\linenomathNonumbers\oldequation}{\oldendequation\endlinenomath}
\expandafter\let\expandafter\oldequationstar\csname equation*\endcsname
\expandafter\let\expandafter\oldendequationstar\csname endequation*\endcsname
\renewenvironment{equation*}{\linenomathNonumbers\oldequationstar}{\oldendequationstar\endlinenomath}
\let\oldalign\align
\let\oldendalign\endalign

\expandafter\let\expandafter\oldalignstar\csname align*\endcsname
\expandafter\let\expandafter\oldendalignstar\csname endalign*\endcsname
\renewenvironment{align*}{\linenomathNonumbers\oldalignstar}{\oldendalignstar\endlinenomath}

\usepackage{tikz-cd}

\makeatletter

\makeatother

\makeatletter
\makeatother

\newcounter{intro}
\newcounter{result}

\theoremstyle{definition}
\newtheorem{theorem}[result]{Theorem}
\newtheorem{lemma}[result]{Lemma}
\newtheorem{result}[result]{result}

\newtheorem{proposition}[result]{Proposition}
\newtheorem{corollary}[result]{Corollary}
\newtheorem{definition}[result]{Definition}

\newtheorem{remark}[result]{Remark}

\newtheorem{notation}[result]{Notation}

\setcounter{secnumdepth}{2}
\numberwithin{equation}{subsection}

\title[Nef cones of Hilbert schemes of points on some K3 surfaces]{Nef cones of Hilbert schemes of points on some K3 surfaces}

\author[U.~Dutta]{Uttaran Dutta}
\address{Department of Mathematics, University of South Carolina}
\email{udutta@email.sc.edu}

\author[S.~Edwards]{Sean Edwards}
\address{Department of Mathematics, Statistics and Computer Science, 
University of Illinois Chicago}
\email{smedwar2@uic.edu}

\author[N.~Raha]{Neelarnab Raha}
\address{Department of Mathematics, The Pennsylvania State University}
\email{neelraha@psu.edu}

\date{\today}

\subjclass[2020]{Primary: 14C05, 14J28. Secondary: 14E30}
\keywords{Nef cones, Hilbert schemes, nested Hilbert schemes, K3 surfaces}
\thanks{The first author was supported by the National Science Foundation under Award No. 2302263 and the Simons Dissertation Fellowship in Mathematics. The second author was partially supported by NSF DMS 2037569}

\DeclareMathOperator{\ch}{ch}

\DeclareMathOperator{\Pic}{Pic}

\DeclareMathOperator{\Supp}{Supp}

\DeclareMathOperator{\sHom}{\mathcal{H}\kern -.5pt\mathit{om}}
\DeclareMathOperator{\sTor}{\mathcal{T}\kern -1.5pt\mathit{or}}

\DeclareMathOperator{\Nef}{Nef}

\begin{document}

\begin{abstract}
    We illustrate the typical usage of Bayer and Macr\`{i}'s Positivity Lemma to compute the nef cones of the Hilbert schemes $X^{[n]}$ by combining the Bridgeland stability methods (for large $n$) and classical methods (for small $n$). We use Mori dream K3 surfaces $X$ of Picard rank $2$ as our working example. We also compute the nef cones of the nested Hilbert schemes $X^{[n,n+1]}$ for such $X$, for large $n$.
\end{abstract}

\maketitle
\setcounter{tocdepth}{1}
\numberwithin{result}{section}
\tableofcontents

\section{Introduction}\label{sec:Intro}
Given a projective variety $Y$, the ample cone in its Néron–Severi space encodes how $Y$ embeds into projective space. Understanding this cone is a key step in running the minimal model program on $Y$. The nef cone, $\Nef(Y)$, is the closure of the ample cone and serves as a fundamental birational invariant. However, computing $\Nef(Y)$ explicitly is often difficult. One has to first prove that a given divisor is nef, and then show that the nef cone \emph{does not extend further}. The latter typically requires producing curves orthogonal to the candidate boundary divisors. For the former, one often pulls back known nef divisors along suitable morphisms. A fairly recent technique involving Bridgeland stability conditions developed by Bayer and Macr\`{i} (see \cite[Lemma 3.3]{Bayer-Macri-PositivityLemma}) and several other authors (see \cite{Bol+:2016}) has been used in \cite{Bol+:2016,Kopper:21} to compute the nef cones of Hilbert schemes of points on hypersurfaces in $\mathbb{P}^3$ and on blow-ups of $\mathbb{P}^2$ for example.

Hilbert schemes of points form an important class of parameter spaces, and in this article we explore the question of finding the nef cones of these Hilbert schemes on K3 surfaces. As a corollary, we also obtain information about the \emph{nested} Hilbert schemes.

\subsection{What is known}

In \cite{Bayer-Macri-PositivityLemma}, Bayer and Macr\`{i} develop a powerful tool (called the \emph{Positivity Lemma}) to create nef divisor classes on moduli stacks of Bridgeland semistable objects on smooth projective varieties. They also explain when a curve class is orthogonal to such a nef divisor class. They then use this technique to compute the nef cone of the Hilbert schemes $X^{[n]}$ of $n$ points on Picard rank 1 K3 surfaces $X$ for large $n$. See \cite[Lemma 3.3, Proposition 10.3]{Bayer-Macri-PositivityLemma}. In fact, their methods (see \cite[Theorem 12.1]{Bayer-Macri-K3}) compute the nef cone of the Hilbert scheme $X^{[n]}$ for any $n$ for any K3 surface $X$. Bridgeland stability techniques are also utilized by Li and Zhao (\hspace{1sp}\cite{LiZhao19}) on $X=\mathbb{P}^2$, by Yanagida and Yoshioka (\hspace{1sp}\cite{YY,Yoshioka16}) on abelian surfaces $X$, and by Nuer and Yoshioka \cite{Nuer},\cite{NuerYoshioka} on Enriques surfaces $X$ to answer similar questions. See also \cite{ABCH13}.

Based on the Positivity Lemma, a general technique for constructing (extremal) nef divisor classes on $X^{[n]}$ (for large enough $n$) is discussed in \cite{Bol+:2016}. Using this technique, Kopper (see \cite{Kopper:21}) has studied the nef cones $X^{[n]}$ (with $n\geq3$) for rational elliptic surfaces $X$. The main idea is to consider a certain cone $\Lambda$ consisting of divisor classes that are non-negative on some particular curves on $X^{[n]}$, so that $\Nef(X^{[n]})\subseteq\Lambda$. Then he bounds $\Lambda$ by a larger cone, and utilizes that to show that the extremal divisors of $\Lambda$ are actually nef (using the method from \cite{Bol+:2016} and the Weyl group action on $\operatorname{NE}(X)$). For small $n$, one usually needs to utilize more classical techniques, using the geometry of the surface in consideration in much more depth. See \cite{Raha2024} for example, where the author studies the nef cones of $X^{[n]}$ for hypersurfaces $X$ in $\mathbb{P}^3$, with $n$ small.

For K3 surfaces, however, several favorable properties hold, allowing one to eliminate the “large enough $n$” requirement. In particular, \cite[Theorem 5.7]{Bayer-Macri-K3} provides a detailed description of the behavior of walls and divisorial contractions, many of which are governed by solutions to certain Pell’s equations. Together with \cite[Theorem 1.2]{Bayer-Macri-K3}, this makes it possible to compute the movable cone of $X^{[n]}$ for a K3 surface $X$. Moreover, \cite[Theorems 12.1 and 12.3]{Bayer-Macri-K3} allow one to compare the nef and the movable cones of $X^{[n]}$.

Using the computations of the nef cones for Hilbert schemes of points on surfaces, \cite{Ryan-Yang:2020} studies the natural projections $X^{[n,n+1]} \to X^{[n]}$ and $X^{[n, n+1]} \to X^{[n+1]}$ of the nested Hilbert schemes $X^{[n,n+1]}$, and the natural curve and divisor classes arising from pullbacks along these projections to compute the nef cones of the nested Hilbert schemes.

\subsection{What we do in this paper}

Our primary goal in this primarily expository article is to illustrate the \emph{typical} usage of Bayer and Macr\`{i}'s Positivity Lemma and the methods developed in \cite{Bol+:2016} to compute the nef cones of the Hilbert schemes $X^{[n]}$ by combining the Bridgeland stability methods (for large $n$) and classical methods (for small $n$). We use K3 surfaces $X$ of Picard rank $2$ that are also Mori dream surfaces as our working example. However, we would like to remind the reader that for K3 surfaces, one can simply utilize the strategy mentioned earlier (more specifically, \cite[Theorem 12.1]{Bayer-Macri-K3}) to get the desired result for any $n$.

We need to briefly introduce some notation before stating the first result. Mori dream K3 surfaces of Picard rank $2$ have divisor class group $\operatorname{Cl}(X)\cong\mathbb{Z}w_1\oplus\mathbb{Z}w_2$ for some smooth curves $w_1$ and $w_2$ on $X$ that are either rational or elliptic. We use the following helpful terminology: \begin{enumerate}
	\item If both $w_1$ and $w_2$ are elliptic, we say that the K3 surface belongs to \emph{case I}.
	\item If one of $w_1$ and $w_2$ is elliptic and the other one is rational, we say that the K3 surface belongs to \emph{case II}.
	\item If both $w_1$ and $w_2$ are rational, we say that the K3 surface belongs to \emph{case III}.
\end{enumerate} In any case, we set $k:=w_1\cdot w_2$. If $X$ is a smooth projective surface with irregularity $q(X)=0,$ then $N^1(X^{[n]})$ is spanned by the divisor $B^{[n]}$ of nonreduced schemes and divisors $L^{[n]}$ induced by divisors $L\in\operatorname{Pic}(X)$. Given any curve $C$ on $X$ admitting a $g^1_n$, the fibers of the map $C\to\mathbb{P}^1$ define a rational curve on $X^{[n]}$, which we denote by $C_{[n]}$. We shall abuse notation and denote its class also by $C_{[n]}$. If $C$ is $w_i$ ($i=1,2$), we write $w_{i,[n]}$ to denote the corresponding $C_{[n]}$. We let $C_0$ be the class of a curve contracted by the Hilbert-Chow morphism. Finally, we let $\Lambda\subset N^1(X^{[n]})$ be the cone of all divisors on $X^{[n]}$ that have non-negative intersections with $w_{1,[n]},w_{2,[n]}$ and $C_0$. We are now ready to state our results.

\begin{theorem}[Theorem \ref{thm:Nef_cone_case1} and Theorem \ref{thm:Nef_cone_case2and3}]
	The nef cone of the Hilbert scheme $X^{[n]}$ can be described as follows:
	\begin{enumerate}
		\item Case I: For any integer $n\geq9k/8$, the nef cone of the Hilbert scheme $X^{[n]}$ is the cone $\Lambda$ constructed above. Dually, the Mori cone $\overline{\operatorname{NE}}(X^{[n]})$ is the cone in $N_1(X^{[n]})$ spanned by the curve classes $w_{1,[n]},w_{2,[n]}$ and $C_0$.
		
		\item Case II: For any large enough integer $n$, the nef cone of the Hilbert scheme $X^{[n]}$ is the cone $\Lambda$ constructed above. Dually, the Mori cone $\overline{\operatorname{NE}}(X^{[n]})$ is the cone in $N_1(X^{[n]})$ spanned by the curve classes $w_{1,[n]},w_{2,[n]}$ and $C_0$.
		
		More precisely, the above assertions hold whenever $n$ satisfies the inequalities $n>k$ and $$32(k^2+2k+1)n^4-4k(25k^2+41k+16)n^3+4k^2(26k^2+33k+7)n^2-4k^3(9k^2+8k-1)n-k^4\geq0.$$
		
		\item Case III: Assume $k\geq 3$. For any large enough integer $n$, the nef cone of the Hilbert scheme $X^{[n]}$ is the cone $\Lambda$ constructed above. Dually, the Mori cone $\overline{\operatorname{NE}}(X^{[n]})$ is the cone in $N_1(X^{[n]})$ spanned by the curve classes $w_{1,[n]},w_{2,[n]}$ and $C_0$.
		
		More precisely, the above assertions hold whenever $n$ satisfies the inequalities $n\geq k\geq 3$ and $$8n^2+(2-9k)n+8\geq0.$$
	\end{enumerate}
\end{theorem}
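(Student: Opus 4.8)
The plan is to show two inclusions: first $\Nef(X^{[n]}) \subseteq \Lambda$, which is the easy direction, and then the reverse inclusion $\Lambda \subseteq \Nef(X^{[n]})$, which is where all the work lies. For the first inclusion, I would simply observe that $w_{1,[n]}$, $w_{2,[n]}$, and $C_0$ are genuine curve classes on $X^{[n]}$ (the first two exist because an elliptic or rational curve $w_i$ with $w_i^2$ large enough admits a $g^1_n$ — here the lower bounds on $n$ such as $n \geq 9k/8$ enter, guaranteeing via Riemann--Roch that a $g^1_n$ exists on the smooth curves $w_i$ — and $C_0$ is the Hilbert--Chow contracted curve), so any nef divisor pairs non-negatively with all three. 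Dually, once both inclusions are established, $\overline{\NE}(X^{[n]})$ is forced to be exactly the cone spanned by $w_{1,[n]}$, $w_{2,[n]}$, $C_0$.

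For the hard inclusion I would follow the Bayer--Macr\`i / \cite{Bol+:2016} / \cite{Kopper:21} strategy. The cone $\Lambda$ is a simplicial (or at worst finitely generated) cone cut out by three linear inequalities, so it suffices to show each of its extremal rays is nef. One extremal ray is spanned by (a multiple of) $B^{[n]}$-adjacent classes coming from the Hilbert--Chow morphism $X^{[n]} \to X^{(n)}$ and the pullback of an ample class on $X$; those are nef by pulling back. The genuinely new extremal rays are the ones orthogonal to $w_{1,[n]}$ and $w_{2,[n]}$. For these I would exhibit, for a suitable Mukai vector $v = (1,0,1-n)$ (or its twist), a Bridgeland stability condition $\sigma$ on $D^b(X)$ on a wall of the relevant chamber such that the Bayer--Macr\`i nef divisor class $\ell_\sigma$ lies on the candidate extremal ray, i.e. $\ell_\sigma \cdot w_{i,[n]} = 0$. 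The Positivity Lemma (\cite[Lemma 3.3]{Bayer-Macri-PositivityLemma}) then guarantees $\ell_\sigma$ is nef provided $\sigma$ lies in the closure of the Gieseker chamber — equivalently, provided no destabilizing subobject appears before we reach $\sigma$. Concretely, one computes the numerical wall in the $(s,t)$-upper-half-plane of stability conditions along which the relevant short exact sequence (involving the ideal sheaf of $n$ points on a curve in $|w_i|$, or a line-bundle quotient) becomes a genuine wall, checks it is the largest such wall (so that $\sigma$ is on the boundary of the big chamber), and reads off $\ell_\sigma$ via the formula $\ell_\sigma(\,\cdot\,) = \Im\big(-Z(\,\cdot\,)/Z(v)\big)$ or the equivalent positivity-lemma pairing.

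The main obstacle — and the source of the explicit polynomial inequalities in cases II and III — is verifying that the wall producing the desired boundary divisor is the \emph{outermost} (numerically largest) wall for $v$ in the relevant slice of stability space: one must rule out every potential destabilizing subobject $F \hookrightarrow$ (ideal sheaf) whose wall could be larger. This is a lattice/discriminant computation: for each candidate Mukai subvector $v'$ one bounds the radius of its wall using the Bogomolov inequality and the structure of $\Cl(X) = \mathbb{Z}w_1 \oplus \mathbb{Z}w_2$ with $w_1 \cdot w_2 = k$, and the requirement that the $w_i$-wall dominates all of these translates into the stated inequalities (e.g. $8n^2 + (2-9k)n + 8 \geq 0$ in case III, and the quartic in case II). The differing hypotheses across the three cases ($w_i^2 = -2$ for rational, $w_i^2 = 0$ for elliptic) change the discriminants and hence the thresholds, which is why case I gives the clean bound $n \geq 9k/8$ while cases II and III need the messier polynomial conditions. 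Once the wall is confirmed outermost, the orthogonality $\ell_\sigma \cdot w_{i,[n]} = 0$ is a direct computation matching the numerical invariants of the fibration class $w_{i,[n]}$ against $\ell_\sigma$, and assembling the two extremal rays (plus the Hilbert--Chow ray) gives $\Lambda \subseteq \Nef(X^{[n]})$, completing the proof.
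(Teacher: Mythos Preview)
Your overall strategy is right and matches the paper's: show $\Nef(X^{[n]})\subseteq\Lambda$ trivially, then prove the new extremal ray of $\Lambda$ is nef via Bridgeland stability and the Bayer--Macr\`i Positivity Lemma, with the numerical bounds on $n$ arising from the requirement that the relevant wall be the Gieseker wall. A few points are inaccurate, though, and one mechanism differs from the paper.

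First, two small errors. The bound $n\geq 9k/8$ has nothing to do with the existence of a $g^1_n$ on the $w_i$: in all three cases the $w_i$ are smooth curves of genus $0$ or $1$, which carry a $g^1_n$ for every $n\geq 2$ (Riemann--Roch on an elliptic curve, and the obvious parametrization for $\mathbb{P}^1$). So the curves $w_{i,[n]}$ exist unconditionally and the inclusion $\Nef(X^{[n]})\subseteq\Lambda$ is free. Second, your description of the extremal rays of $\Lambda$ is off: the Hilbert--Chow pullback contributes a two-dimensional face (namely $\Nef(X)$, spanned in Case~I by $w_1^{[n]}$ and $w_2^{[n]}$), not a single ray, and there is only \emph{one} new extremal ray to verify, namely $\tfrac{n}{k}w_1^{[n]}+\tfrac{n}{k}w_2^{[n]}-\tfrac12 B^{[n]}$ (the paper isolates this in a separate lemma). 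So ``assembling two new rays plus the Hilbert--Chow ray'' is not the right picture.

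More substantively, the paper does not rule out all potential destabilizing Mukai subvectors by hand via Bogomolov-type bounds as you sketch. Instead it makes a specific choice of $(H,D)$-slice---in Case~I, $H=(\tfrac{n}{k}-1)(w_1+w_2)$ and $D=-(w_1+w_2)$---invokes the packaged machinery of \cite{Bol+:2016}: one lists the finite set $\operatorname{CrDiv}(H,D)$ of critical effective divisors, computes the centers of the walls $W(\mathcal{O}(-F),I_Z)$ for each $F$ in that set, identifies the outermost one, and then checks the single radius inequality $\rho^2\geq\varrho$ from \cite[Theorem~3.6]{Bol+:2016}. That inequality is exactly where $n\geq 9k/8$ (and the analogous polynomial conditions in Cases~II and~III) comes from. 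Nefness then follows directly from \cite[Proposition~3.8]{Bol+:2016}, which already encodes the Positivity Lemma and hands you the divisor $\tfrac12 K_X^{[n]}-s_W H^{[n]}-D^{[n]}-\tfrac12 B^{[n]}$. Your approach would in principle reach the same conclusion, but the paper's route is shorter and is what actually produces the stated explicit bounds.
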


The above calculation can now be used to obtain a bound for the nef cone of $X^{[n]}$ for small $n$.

\begin{proposition}[Proposition \ref{prop:bound_for_small_n_case1}]
    Let $X$ be a K3 surface that is in case I, and suppose $n'$ is the positive integer for which $n'\geq9k/8$ and $n'-1<9k/8$. Let $n$ be a positive integer with $1<n<n'$, and $j:=n'-n$. Then the nef cone of $X^{[n]}$ is sandwiched between the cones $\Lambda$ and $\Lambda'$, where $\Lambda$ is the cone in $N^1(X^{[n]})$ spanned by $\Nef(X)$ and $$\frac{n}{k}w_1^{[n]}+\frac{n}{k}w_2^{[n]}-\frac{1}{2}B^{[n]},$$ and $\Lambda'$ is the cone in $N^1(X^{[n]})$ spanned by $\Nef(X)$ and the divisor $$\frac{n+j}{k}w_1^{[n]}+\frac{n+j}{k}w_2^{[n]}-\frac{1}{2}B^{[n]}.$$
\end{proposition}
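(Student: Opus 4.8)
The plan is to sandwich $\Nef(X^{[n]})$ between the two explicit cones by combining the known computation of $\Nef(X^{[n']})$ from the main theorem (Case~I) with the elementary geometry of a single "free" blow-up and a divisibility trick. For the \emph{lower bound} $\Lambda \subseteq \Nef(X^{[n]})$, I would first check directly that $\Nef(X)$ (pulled back from $X$, i.e.\ the span of the $L^{[n]}$ with $L$ nef) consists of nef classes on $X^{[n]}$, since these are pulled back along the natural rational map to $\Sym^n X$ or, more cleanly, are restrictions of nef classes under the morphisms used to build $X^{[n]}$. Then I would show that $D_n := \tfrac{n}{k}w_1^{[n]}+\tfrac{n}{k}w_2^{[n]}-\tfrac12 B^{[n]}$ is nef by exhibiting it as a pullback: the point is that $\tfrac{n}{k}(w_1+w_2)$ is (numerically) the class whose $n$-th symmetric power, corrected by $-\tfrac12 B^{[n]}$, is the hyperplane-type class giving the map contracting exactly $C_0$ and the $w_{i,[n]}$; concretely, $D_n\cdot C_0=0$, $D_n\cdot w_{i,[n]}=0$ (using $w_1\cdot w_2=k$ and $w_i^2$ computed from adjunction on the K3, which forces the coefficient $\tfrac{n}{k}$), so $D_n$ lies on the appropriate face of $\Lambda$ for $X^{[n]}$; nefness of this boundary class then follows from the Bayer--Macrì positivity machinery exactly as in the proof of the main theorem, except one no longer needs $n\ge 9k/8$ to make the \emph{whole} cone work — one only needs the single wall giving $D_n$, which persists for all $n>1$.

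For the \emph{upper bound} $\Nef(X^{[n]}) \subseteq \Lambda'$, the idea is transport along adding $j=n'-n$ reduced points. Fix a general length-$j$ subscheme $Z_0 \subset X$ supported away from everything in sight; the assignment $\xi \mapsto \xi \cup Z_0$ gives a closed immersion $\iota\colon X^{[n]} \hookrightarrow X^{[n']}$ whose image meets the locus where $X^{[n']}$ looks like $X^{[n]}\times (\text{nearby points})$. Pulling back a nef class on $X^{[n']}$ along $\iota$ gives a nef class on $X^{[n]}$, and one computes $\iota^* B^{[n']} = B^{[n]}$ and $\iota^* w_i^{[n']} = w_i^{[n]}$ (the point added is general, so it contributes nothing to these divisor classes on the image). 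Applying this to the extremal nef class $\tfrac{n'}{k}w_1^{[n']}+\tfrac{n'}{k}w_2^{[n']}-\tfrac12 B^{[n']}$ of $\Lambda = \Nef(X^{[n']})$ produces exactly $\tfrac{n+j}{k}w_1^{[n]}+\tfrac{n+j}{k}w_2^{[n]}-\tfrac12 B^{[n]} \in \Nef(X^{[n]})$, and applying it to $\Nef(X) \subseteq \Nef(X^{[n']})$ recovers $\Nef(X)\subseteq\Nef(X^{[n]})$. Since $\Nef(X^{[n]})$ is a two-dimensional (in the $w$-plane, three-dimensional total) cone in the plane spanned by $\Nef(X)$ and the $B^{[n]}$-direction, and we have produced nef classes on both of its extremal rays' worth of data bounding it from outside, one concludes $\Nef(X^{[n]}) \subseteq \Lambda'$: any nef class is non-negative on $C_0$ and on the $w_{i,[n]}$ (these curves exist on $X^{[n]}$ for every $n>1$), hence lies in the cone cut out by those inequalities, which is precisely $\Lambda'$ once we know its extremal generator on the "$B$ side" is the $\iota$-pullback above. (More carefully: $\Lambda'$ is cut out by $C_0$ and by the two curve classes dual to the two extremal rays of $\Lambda'$, and these dual curves are limits/specializations of $w_{i,[n]}$-type curves, so they lie in $\overline{\NE}(X^{[n]})$, giving $\Nef(X^{[n]})\subseteq\Lambda'$.)

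The main obstacle I anticipate is the upper bound, specifically justifying that $\iota^*$ sends the extremal nef divisor of $X^{[n']}$ to the claimed extremal divisor with coefficient $\tfrac{n+j}{k}$ rather than $\tfrac{n}{k}$ — i.e.\ pinning down how the "non-reduced" divisor and the tautological divisors restrict under "add $j$ general points," and ruling out that $\Nef(X^{[n]})$ could be \emph{strictly} smaller than $\Lambda'$ in a way not detected by these pullbacks. Handling this cleanly likely requires the explicit intersection-number computations $w_{i,[n]}\cdot B^{[n]}$, $w_{i,[n]}\cdot w_j^{[n]}$, $w_{i,[n]}\cdot C_0$ on $X^{[n]}$ (all of which follow from $g^1_n$ existence on $w_i$ — available for $n$ at least the gonality, which for these rational/elliptic curves is small — and standard Hilbert-scheme intersection theory), together with the observation that the curve $w_{i,[n]}$ deforms as we vary $n$, so its class is "the same" across the $\iota$. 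The lower-bound nefness of $D_n$ for \emph{small} $n$ is a genuine point to check but should reduce to the $\ell=1$ or boundary case of the wall-crossing analysis already done for the theorem; I would cite that analysis and verify only that the relevant wall does not degenerate for $1<n<n'$.
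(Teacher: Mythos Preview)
You have the two inclusions reversed. Since $(n+j)/k>n/k$, the extremal ray of $\Lambda'$ is $D_n$ plus a class from $\Nef(X)$, so $\Lambda'\subseteq\Lambda$; the proposition asserts $\Lambda'\subseteq\Nef(X^{[n]})\subseteq\Lambda$, not the other way around. This reversal propagates into both halves of your argument.

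Your ``lower bound'' tries to show that $D_n=\tfrac{n}{k}w_1^{[n]}+\tfrac{n}{k}w_2^{[n]}-\tfrac12 B^{[n]}$ is nef for $1<n<n'$. This is precisely what is \emph{not} known, and is the entire reason the proposition only gives a sandwich rather than an equality. Your claim that ``one only needs the single wall giving $D_n$, which persists for all $n>1$'' is incorrect: the condition $n\geq 9k/8$ in the proof of the main theorem is exactly the inequality $\rho^2\geq\varrho$ needed to certify that $W(\mathcal{O}_X(-w_1),I_Z)$ is the Gieseker wall, without which Proposition~\ref{main Bol+ theorem} does not apply. The correct outer bound $\Nef(X^{[n]})\subseteq\Lambda$ is immediate from the existence of the curves $w_{1,[n]},w_{2,[n]},C_0$ together with Lemma~\ref{lem:characterize_Lambda}; you do mention this curve argument, but you misidentify the resulting cone as $\Lambda'$ rather than $\Lambda$.

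Your ``upper bound'' paragraph actually contains the right idea and matches the paper: the add-$j$-general-points embedding $\iota:X^{[n]}\hookrightarrow X^{[n']}$ satisfies $\iota^*w_i^{[n']}=w_i^{[n]}$ and $\iota^*B^{[n']}=B^{[n]}$, so pulling back the known extremal nef class on $X^{[n']}$ shows $\tfrac{n+j}{k}w_1^{[n]}+\tfrac{n+j}{k}w_2^{[n]}-\tfrac12 B^{[n]}$ is nef on $X^{[n]}$. But this proves $\Lambda'\subseteq\Nef(X^{[n]})$, the \emph{inner} bound, not $\Nef(X^{[n]})\subseteq\Lambda'$. Once you swap the labels, your add-a-point argument together with the curve argument is exactly the paper's proof.
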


In the particular case of $n=k=2$, we study the geometry of the K3 surface $X$ closely. We consider a $2$-to-$1$ cover of a smooth quadric $Q$ in $\mathbb{P}^3$ by $X$ (where $X$ is in case I), and look at nef divisor classes pulled back along the induced rational map $X^{[2]}\dashrightarrow Q^{[2]}$. Studying curves outside the domain of definition of this rational map, we obtain the following result.

\begin{theorem}[Theorem \ref{thm:Nef_cone_nk2}]
    In case I when $k=2$, the nef cone of the Hilbert scheme $X^{[2]}$ is spanned by $w_1^{[2]}, w_2^{[2]}$ and $$w_1^{[2]} + w_2^{[2]} - \frac{1}{2}B^{[2]}.$$
\end{theorem}

In subsection \ref{subsec:nef_cone_nk2_lattice_computation}, we once again calculate the nef cone of the Hilbert scheme $X^{[2]}$ in case I when $k=2$, this time directly using \cite[Theorem 12.1]{Bayer-Macri-K3}. In other words, we bypass \cite{Bol+:2016} and illustrate a direct computation of the nef cone of $X^{[n]}$ in the case of K3 surfaces $X$.

Finally, analogous to what is done in \cite{Ryan-Yang:2020}, we compute the nef cones of the nested Hilbert schemes $X^{[n,n+1]}$ (for $n\gg0$) of points on K3 surfaces discussed above. In this case, we restrict to Case I. The intersection table \ref{nestedtable} is the key ingredient. Because of these intersection numbers, the argument of \cite{Ryan-Yang:2020} carries through with no modification when $X$ is of case I:

\begin{corollary}
    Let $X$ be in case I. Let $D_n$ be the extremal ray of $\mathrm{Nef}(X^{[n]})$ which does not come from the inclusion $\mathrm{Nef}(X) \hookrightarrow \mathrm{Nef}(X^{[n]})$. Then the nef cone of the nested Hilbert scheme $X^{[n,n+1]}$ for $n\geq9k/8$ is spanned by 
    \[
    \{\mathrm{pr}_a^*(w_1^{[n]}), \mathrm{pr}_a^*(w_2^{[n]}), w_1^{\mathrm{diff}}, w_2^{\mathrm{diff}}, \mathrm{pr}_a^*(D_n), \mathrm{pr}_b^*(D_{n+1}) \}.
    \]
\end{corollary}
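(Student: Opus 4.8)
The plan is to run the argument of \cite{Ryan-Yang:2020} in the present setting; the one ingredient that must be produced by hand is the intersection Table~\ref{nestedtable}, and the content of the corollary is that in case~I its entries are exactly those for which Ryan and Yang's chain of deductions goes through verbatim. Write $\mathrm{pr}_a\colon X^{[n,n+1]}\to X^{[n]}$ and $\mathrm{pr}_b\colon X^{[n,n+1]}\to X^{[n+1]}$ for the forgetful maps and $\rho\colon X^{[n,n+1]}\to X$ for the residual-point morphism, sending a flag $Z\subset Z'$ to the support of the length-one sheaf $\mathcal I_Z/\mathcal I_{Z'}$. Using $q(X)=0$ one checks that $(\mathrm{pr}_a,\rho)$ is birational with a single irreducible exceptional divisor $\Phi$, so $N^1(X^{[n,n+1]})$ has rank $6$ with basis $\mathrm{pr}_a^*w_1^{[n]},\mathrm{pr}_a^*w_2^{[n]},\mathrm{pr}_a^*B^{[n]},\rho^*w_1,\rho^*w_2,\Phi$; moreover $w_i^{\mathrm{diff}}=\rho^*w_i=\mathrm{pr}_b^*w_i^{[n+1]}-\mathrm{pr}_a^*w_i^{[n]}$, and a short computation with this identity shows the six classes in the statement are linearly independent, so the cone $\Sigma$ they span is simplicial. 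Equivalently, $\Sigma=\mathrm{pr}_a^*\Nef(X^{[n]})+\rho^*\Nef(X)+\mathbb R_{\ge 0}\,\mathrm{pr}_b^*D_{n+1}$, since in case~I $\Nef(X)=\langle w_1,w_2\rangle$.

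First I would verify that each of the six classes is nef. By Theorem~\ref{thm:Nef_cone_case1}, for $n\gg 0$ one has $\Nef(X^{[m]})=\langle w_1^{[m]},w_2^{[m]},D_m\rangle$ (with $\langle w_1^{[m]},w_2^{[m]}\rangle$ the image of $\Nef(X)$) for $m=n$ and $m=n+1$, so $w_i^{[n]},D_n,D_{n+1}$ are nef on the respective Hilbert schemes; and $w_i^{\mathrm{diff}}=\rho^*w_i$ is the pullback of the base-point-free class $w_i$ on $X$, where case~I is used. As pullbacks of nef classes are nef, $\Sigma\subseteq\Nef(X^{[n,n+1]})$.

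It remains to show $\Nef(X^{[n,n+1]})\subseteq\Sigma$, and since $\Sigma$ is simplicial it suffices, for each of its six extremal rays $R$, to exhibit an effective curve on $X^{[n,n+1]}$ orthogonal to the other five rays and strictly positive on $R$; then $\Nef(X^{[n,n+1]})$ lies in the cone cut out by those six curves, which is $\Sigma$. These are the natural curves of \cite{Ryan-Yang:2020}: the exceptional curve of $\mathrm{pr}_a$ over a general reduced $Z$ at one of its points (which $\mathrm{pr}_a$ and $\rho$ contract, while $\mathrm{pr}_b$ sends it onto a Hilbert--Chow curve of $X^{[n+1]}$), lifts of the Hilbert--Chow and pencil curves from the two factors, and the curve given by the residual point sweeping out a fibre of $w_i$ with $Z$ held fixed. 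Computing the $\mathrm{pr}_a$-, $\mathrm{pr}_b$- and $\rho$-pushforwards of these curves and pairing with the six divisors produces Table~\ref{nestedtable}, from which one reads off that each facet of $\Sigma$ is cut out by one of these curves; here one invokes Theorem~\ref{thm:Nef_cone_case1} to know $\overline{\operatorname{NE}}(X^{[n]})$ and $\overline{\operatorname{NE}}(X^{[n+1]})$, together with the relations $D_m\cdot w_{i,[m]}=0$, $D_m\cdot C_0>0$ and $w_i^{[m]}\cdot C_0=0$.

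The main obstacle is precisely this last bookkeeping: one must check that the case~I intersection numbers reproduce, entry for entry, the table driving \cite{Ryan-Yang:2020} — in particular that $\mathrm{pr}_b^*D_{n+1}$ is non-negative on every curve contracted by $\mathrm{pr}_a$ and $\mathrm{pr}_a^*D_n$ on every curve contracted by $\mathrm{pr}_b$, and that no $(-2)$-curve on $X$ introduces an extra extremal ray. This is where the hypotheses enter: case~I forces $w_1^2=w_2^2=0$ and $\Nef(X)=\langle w_1,w_2\rangle$ with no $(-2)$-curves, while $n\gg 0$ places both $\Nef(X^{[n]})$ and $\Nef(X^{[n+1]})$ in the standard form cut out by $w_{1,[m]},w_{2,[m]}$ and $C_0$ via Theorem~\ref{thm:Nef_cone_case1} — exactly the situation in which the argument of \cite{Ryan-Yang:2020} applies with no modification.
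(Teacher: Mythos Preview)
Your proposal is correct and takes exactly the paper's approach: the paper's entire argument is the sentence ``Because of these intersection numbers, the argument of \cite{Ryan-Yang:2020} carries through with no modification when $X$ is of case I,'' and you have simply unpacked what that means --- identifying the six nef generators via pullback along $\mathrm{pr}_a$, $\mathrm{pr}_b$, and $\res$, and then using the standard curves to cut out the facets of the simplicial cone. The only difference is that you supply the details the paper leaves implicit.
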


In fact, using the complete description (see \cite[Theorem 12.1]{Bayer-Macri-K3}) of the nef cone of $X^{[n]}$ for any $n$, where $X$ is a K3 surface, one can similarly compute the nef cone of $X^{[n,n+1]}$ for any $n$.

\subsection*{Organization of the paper}
In \S\ref{sec:Prelims}, we review the necessary background on K3 surfaces, Hilbert schemes, and Bridgeland stability conditions, and outline our overall strategy (see \S\ref{strategy}). In \S\ref{sec:NefCones}, we present the main computations. \S\ref{subsec:large_n} focuses on the large $n$ case, where we use techniques from \cite{Bol+:2016} to determine nef cones. Finally in \S\ref{subsec:small_n} and \S\ref{subsec:nef_cone_nk2_lattice_computation} we address the small $n$ case and directly apply \cite{Bayer-Macri-K3}.

\subsection*{Acknowledgments} We would like to thank Izzet Coskun, Jack Huizenga, and Howard Nuer for several helpful discussions. We also express our gratitude toward Arend Bayer and Emanuele Macr\`{i} for their comments on an earlier draft of this article that drew our attention to the results in \cite{Bayer-Macri-K3}, and made the exposition clearer and more historically accurate.

\section{Preliminaries}\label{sec:Prelims}

In this section, we remind the reader of some definitions and collect some basic facts about K3 surfaces, Hilbert schemes of points, and Bridgeland stability conditions, along with their usage in determining nef cones.

\subsection{K3 surfaces}
Throughout this paper, we shall work over an algebraically closed field $k$ of characteristic zero. 
\begin{definition}
    A \emph{K3 surface} over $k$ is a complete non-singular variety X of dimension
two such that
\begin{align*}
    \omega_X\cong\mathcal{O}_X\text{ and }H^1(X,\mathcal{O}_X)=0.
\end{align*}
\end{definition}
We now turn our interest to Mori dream K3 surfaces of Picard rank $\rho(X)=2.$ Since the specific properties of Mori dream spaces are not used in this paper, we refer to \cite[Ch. 5]{Coxrings:2015} for definitions and background.

By \cite[Th. 5.1.5.3]{Coxrings:2015}, a Picard rank $2$ K3 surface is a Mori dream surface if and only if there exists $w$ in the divisor class group $\operatorname{Cl}(X)$, such that $w^2 \in\{0,-2\}.$ The following result is the starting point of this paper.

\begin{proposition}{\cite[Proposition 5.3.2.4]{Coxrings:2015}}\label{Prop. 2.3}
     Let $X$ be a K3 surface with $\operatorname{Cl}(X)\cong \mathbb{Z}w_1\oplus\mathbb{Z}w_2$ for some smooth curves $w_1$ and $w_2$ on $X$. Then the following are equivalent.
    \begin{enumerate}
        \item The intersection matrix of $w_1,w_2$ is one of the following, where $k\geq 2$ in case $I$ and $k\geq 1$ in the remaining cases:
        \begin{align*}
        \mbox{(I)}:\begin{bmatrix}
            0 & k\\
            k & 0
        \end{bmatrix}\hspace{1cm} \mbox{(II)}:\begin{bmatrix}
            -2 & k\\
            k & 0
        \end{bmatrix}\hspace{1cm} \mbox{(III)}:\begin{bmatrix}
            -2 & k\\
            k & -2
        \end{bmatrix}
    \end{align*}
        \item The effective cone $\operatorname{Eff}(X)$ is polyhedral and, after possibly substituting $w_i$   with $-w_i$, it is generated as a cone by $w_1, w_2$.
    \end{enumerate}
\end{proposition}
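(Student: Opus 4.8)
The strategy is to translate both conditions into lattice geometry on $\operatorname{NS}(X)=\operatorname{Cl}(X)=\mathbb{Z}w_1\oplus\mathbb{Z}w_2$ (here $\rho(X)=2$), setting $\sigma:=\mathbb{R}_{\ge0}w_1+\mathbb{R}_{\ge0}w_2$ and $k:=w_1\cdot w_2$, so that $x=aw_1+bw_2$ has $x^2=a^2w_1^2+2abk+b^2w_2^2$. I would use three standard facts about a projective K3 surface $X$: (i) by the Hodge index theorem the intersection form on $\operatorname{NS}(X)$ has signature $(1,1)$, so $\{x:x^2>0\}$ has two components, and the closure $\overline{\mathcal C}$ of the one containing the ample cone equals $\{x:x^2\ge0,\ x\cdot A\ge0\}$ for a fixed ample $A$; in particular every nonzero effective class with non-negative square lies in $\overline{\mathcal C}$, while every nonzero effective class pairs strictly positively with $A$; (ii) since $\omega_X\cong\mathcal O_X$, adjunction gives $C^2=2p_a(C)-2\ge-2$, with $C^2$ even, for every irreducible curve $C$, and $C^2=-2$ exactly when $C$ is a smooth rational curve; (iii) by Riemann--Roch a class of positive square pairing positively with $A$ is big, hence lies in the interior of $\operatorname{Eff}(X)$. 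Since $w_1,w_2$ are curves they are effective, so $\sigma\subseteq\operatorname{Eff}(X)$ always, and the whole content is to decide when the reverse inclusion holds.

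\textbf{$(1)\Rightarrow(2)$:} In each of the three cases one first solves $x^2=0$ to find the two null rays of $\overline{\mathcal C}$ and checks $\overline{\mathcal C}\subseteq\sigma$: in case I the null rays are $\mathbb R_{\ge0}w_1$ and $\mathbb R_{\ge0}w_2$; in case II they are $\mathbb R_{\ge0}w_2$ and $\mathbb R_{\ge0}(kw_1+w_2)$; in case III they have slopes $\tfrac{k\pm\sqrt{k^2-4}}{2}$, both strictly positive since $k\ge3$, so $\overline{\mathcal C}$ lies in the first quadrant. One then solves $x^2=-2$: in case I, $2abk=-2$ has no solution as $k\ge2$, so there are no $(-2)$-curves; in case II, $a(a-bk)=1$ forces $(a,b)=(\pm1,0)$, so the $(-2)$-classes are exactly $\pm w_1$; in case III the $(-2)$-classes solve $a^2-kab+b^2=1$, and since $ab<0$ would give $a^2-kab+b^2\ge(k+2)|ab|\ge k+2>1$, every such class has $a,b$ of the same sign. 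Now let $C$ be any irreducible curve: if $C^2\ge0$ then $[C]\in\overline{\mathcal C}\subseteq\sigma$ by (i); if $C^2<0$ then $C^2=-2$ by (ii), so $[C]=aw_1+bw_2$ is an effective $(-2)$-class, its coefficients are not both negative by the classification, and $[C]\cdot A>0$ rules out a single negative coefficient, so $a,b\ge0$ and $[C]\in\sigma$. Hence every effective divisor class, being a non-negative sum of classes of irreducible curves, lies in $\sigma$, so $\operatorname{Eff}(X)=\sigma$, a rational polyhedral cone generated by $w_1,w_2$.

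\textbf{$(2)\Rightarrow(1)$:} Suppose $\operatorname{Eff}(X)=\sigma$. As $w_1,w_2$ are linearly independent they span the two extremal rays of $\sigma$, so each $w_i$ lies on its boundary; were $w_i^2>0$, then $w_i$ (effective, hence pairing positively with $A$) would lie in the open positive cone, hence be big, hence lie in the interior of $\operatorname{Eff}(X)$ by (iii), a contradiction. So $w_i^2\le0$, and by (ii), $w_i$ being a smooth curve, $w_i^2\in\{0,-2\}$. The distinct irreducible curves $w_1,w_2$ satisfy $k=w_1\cdot w_2\ge0$, and $k=0$ is impossible since then $\operatorname{diag}(w_1^2,w_2^2)$ would be negative semidefinite, contradicting signature $(1,1)$; so $k\ge1$. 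Up to relabeling, $(w_1^2,w_2^2)$ is $(0,0)$, $(-2,0)$, or $(-2,-2)$, giving matrices (I), (II), (III). In case (III), $\det=4-k^2$ must be negative, so $k\ge3$. In case (I), if $k=1$ then $(w_1-w_2)^2=-2$, so by Riemann--Roch and Serre duality one of $\pm(w_1-w_2)$ is effective; neither lies in $\sigma$, contradicting (2), so $k\ge2$. This reproduces the list in (1). (Note $k=0$ in all cases and $k\le 2$ in case (III) do not occur for an actual K3 surface, since the intersection form would not have signature $(1,1)$; the proposition's ``$k\ge1$'' in case (III) should be read as ``$k\ge3$''.)

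\textbf{Expected main obstacle:} The genuinely delicate point is case III with $k\ge3$, where there are infinitely many $(-2)$-classes and the boundary rays of $\overline{\mathcal C}$ are irrational, so one cannot merely enumerate extremal rays; the computation closes up only because every $(-2)$-class has same-signed coefficients (the inequality $a^2-kab+b^2\ge(k+2)|ab|$ when $ab<0$) and because the positive cone still sits inside $\sigma$. Elsewhere the care needed is conceptual rather than computational: in the direction $(1)\Rightarrow(2)$ one must bound $\operatorname{Eff}(X)$ using only adjunction and positivity of effective and ample classes, without presupposing the shape of $\operatorname{Eff}(X)$.
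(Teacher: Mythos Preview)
The paper does not prove this proposition at all; it is quoted verbatim from \cite[Proposition~5.3.2.4]{Coxrings:2015} as background and used without argument. So there is no ``paper's own proof'' to compare against, and what you have written is a genuine proof where the paper offers none.

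Your argument is correct. One phrasing is slightly loose: in $(1)\Rightarrow(2)$ you write ``its coefficients are not both negative by the classification, and $[C]\cdot A>0$ rules out a single negative coefficient,'' but in case~III your Pell-type computation only yields $ab\ge0$, which still allows $a,b\le0$. The clean way to close this (which is surely what you intend) is: if $a,b\le0$ with $(a,b)\neq(0,0)$, then $-[C]=(-a)w_1+(-b)w_2$ is a non-negative combination of the effective curves $w_1,w_2$, hence effective, so $(-[C])\cdot A>0$, contradicting $[C]\cdot A>0$. This uniformly disposes of the boundary cases $\pm w_i$ in cases~II and~III as well. Your side observation that Hodge index forces $k\ge3$ in case~III (so that the stated bound ``$k\ge1$'' is weaker than what actually occurs) is correct and worth keeping.
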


Since $\operatorname{Nef}(X)$ is the dual of the effective cone, we list the nef cones in each case of the above proposition.
\begin{enumerate}
    \item $\operatorname{Nef}(X)=\operatorname{Cone}(w_1,w_2).$
    \item $\operatorname{Nef}(X)=\operatorname{Cone}(kw_1+2w_2,w_2).$
    \item $\operatorname{Nef}(X)=\operatorname{Cone}(kw_1+2w_2,2w_1+kw_2).$
\end{enumerate}
The adjunction formula gives us
\begin{equation} \label{genusformula}
    g(C)=\frac{C^2+2}{2}.
\end{equation}
Hence, 
\begin{enumerate}
    \item $g(w_1)=1=g(w_2).$
    \item $g(w_1)=0,\ g(w_2)=1.$
    \item $g(w_1)=0=g(w_2).$
\end{enumerate}

\subsection{Hilbert schemes of points}\label{Hilbscheme}
Let $X$ be a smooth projective surface with $h^1(X,\mathcal{O}_X)=0.$ By the work of Fogarty (see {\cite{Fogarty}}), we know the Hilbert scheme of $n$ points $X^{[n]}$ is a smooth projective variety of dimension $2n$ which resolves the singularities of the symmetric product $X^{(n)}$ via the Hilbert-Chow morphism $$X^{[n]}\to X^{(n)}.$$ 

A line bundle $L$ on $X$ induces a line bundle $L^{(n)} 
\coloneq \bigotimes_i pr_i^*L$ on the symmetric product, and by pulling back via the Hilbert-Chow morphism one defines a line bundle on $X^{[n]},$ which we will denote by $L^{[n]}$. \cite{Fogarty} shows that
$$\operatorname{Pic}(X^{[n]})\cong \operatorname{Pic}(X)\oplus \mathbb{Z}(B^{[n]}/2),$$
where $\operatorname{Pic}(X)\hookrightarrow \operatorname{Pic}(X^{[n]})$ by $L\mapsto L^{[n]},$ and $B^{[n]}$ is the locus of nonreduced schemes, i.e. the exceptional divisor of the Hilbert–Chow morphism. We will often drop the superscript on $B^{[n]}$ when it is unlikely to result in confusion. Tensoring by the real numbers, the Néron–Severi space $N^1(X^{[n]})$ is therefore spanned by $N^1(X)$ and $B.$

We denote by $C_0$ the class of a curve contracted
by the Hilbert-Chow morphism. There are many ways curves on $X$ induce curve classes on $X^{[n]}.$ In this paper, we will use the following construction. Let $C\subset X$ be a reduced and irreducible curve. If $C$ admits a degree $n$ map to $\mathbb{P}^1$ (i.e. a $g^1_n$), then the fibers of $C\to\mathbb{P}^1$ give a rational curve $\mathbb{P}^1\to X^{[n]}.$ We write $C_{[n]}$ for this class.

Let $C$ be a curve and $D$ be a divisor on $X$. The following intersection numbers are standard (see \cite{Fogarty}):
\[
\begin{array}{c|cc}
 & D^{[n]} & B \\[4pt]
 \hline\\[-9pt]
C_{[n]} & C\cdot D & 2g(C)-2+2n \\[4pt]
C_0 & 0 & -2
\end{array}
\]

\subsection{Nested Hilbert schemes of points}

A natural generalization of the above Hilbert schemes is the notion $\NestHilb$ of \emph{nested Hilbert schemes}. The scheme $\NestHilb$ parametrizes $(z,z')\in X^{[n]}\times X^{[n+1]}$ such that $z$ is a subscheme of $z'$. By \cite[Chapter 1]{Cheah:1998}, $X^{[n,n+1]}$ is a smooth projective variety of dimension $2n+2$.

We introduce some notation. Let $\prj_a:\NestHilb\to X^{[n]}$ and $\prj_b:\NestHilb\to X^{[n+1]}$ be the projections. Given a line bundle $L$ on $X$, let $L^a:=\prj_a^*(L^{[n]})$, $L^b:=\prj_b^*(L^{[n+1]})$. Also let $B^a:=\prj_a^*(B^{[n]})$ and $B^b:=\prj_b^*(B^{[n+1]})$. Moreover, consider the \emph{residuation} morphism $\res:\NestHilb\to X$, which is the map that takes a tuple $(z,z')$ as above to the scheme $z'\backslash z$. We then have the following divisors $$B^{\diff}:=B^b-B^a=\{\xi=(z,z')\in\NestHilb\,:\,\Supp(z)\cap\res(\xi)\neq\emptyset\}$$ and $L^{\diff}:=L^b-L^a$. If $L$ is an effective line bundle on $X$ with a general representative $D\in|L|$, then $$L^{\diff}=\{\xi=(z,z')\in\NestHilb\,:\,D\cap\res(\xi)\neq\emptyset\}.$$

By \cite[Proposition 3.1]{Ryan-Yang:2020}, $$\Pic(X^{[n,n+1]})=\prj_a^*(X^{[n]})\oplus\prj_b^*(X^{[n+1]})=\Pic(X)^2\oplus\mathbb{Z}\!\cdot\!\frac{B_a}{2}\oplus\mathbb{Z}\!\cdot\!\frac{B_b}{2}.$$

We also recall some standard curves on the nested Hilbert scheme $\NestHilb$. \begin{enumerate}
    \item For $i=1,2$, let $C_i^a$ be the curve on $\NestHilb$ given by fixing $n$ general points of $X$ as part of $z'$ (including the residual point), and varying a point on a curve of class $w_i$ (and including this point in $z$).
    \item For $i=1,2$, let $C_i^b$ be the curve given by fixing $n$ general points of $X$ as $z$, and varying a point on a curve of class $w_i$ to get $z'$.
    \item Let $C_0^a$ be the curve given by fixing $n-2$ general points of $X$ as part of $z$, fixing one general point on a curve of class $w_1$ (and including it in $z$), fixing a general point of $X$ as the residual point, and varying one point on the same curve of class $w_1$ as above (and including it in $z$).
    \item Let $C_0^b$ be the curve given by fixing $n-1$ general points of $X$ as part of $z$, fixing one general point on a curve of class $w_1$ (and including it in $z$), and varying the residual point on the same curve of class $w_1$.
    \item Let $A^a$ be the curve obtained by fixing $n-1$ general points of $X$ with a scheme of length $3$ supported at the residual point (all of these constitute $z')$, with $z$ being the subscheme of $z'$ obtained by including all the reduced points, and varying a double-point structure at the nonreduced point.
    \item Finally, let $A^b$ be the curve obtained by fixing $n$ points of $X$ as $z$, and varying a double-point structure at one of those points (fixed) to get the residual.
\end{enumerate} We refer the reader to \cite[\S4.2]{Ryan-Yang:2020} for pictures explaining these curves (although we must caution the reader that their projections are reversed compared to ours, whence their superscripts $a$ and $b$ are reversed as well).

Let $C_1, C_2$ be curves of classes $w_1,w_2$ respectively. We note the following intersection products from \cite[4.3.3]{Ryan-Yang:2020} (Most of these computations follow from the push-pull formula): \[ \label{nestedtable}
\begin{array}{c|ccccccccc}
 & w_1^a & w_1^b & w_1^{\diff} & w_2^a & w_2^b & w_2^{\diff} & B^a & B^b & B^{\diff} \\[4pt]
 \hline\\[-9pt]
C_1^a & w_1 \cdot C_1 & w_1 \cdot C_1 & 0 & w_2 \cdot C_1 & w_2 \cdot C_1 & 0 & 0 & 0 & 0 \\[4pt]
C_2^a & w_1 \cdot C_2 & w_1 \cdot C_2 & 0 & w_2 \cdot C_2 & w_2 \cdot C_2 & 0 & 0 & 0 & 0 \\[4pt]
C_1^b & 0 & w_1 \cdot C_1 & w_1 \cdot C_1 & 0 & w_2 \cdot C_1 & w_2 \cdot C_1 & 0 & 0 & 0 \\[4pt]
C_2^b & 0 & w_1 \cdot C_2 & w_1 \cdot C_2 & 0 & w_2 \cdot C_2 & w_2 \cdot C_2 & 0 & 0 & 0 \\[4pt]
C_0^a & w_1 \cdot C_1 & w_1 \cdot C_1 & 0 & w_2 \cdot C_1 & w_2 \cdot C_1 & 0 & 2 & 2 & 0 \\[4pt]
C_0^b & 0 & w_1 \cdot C_1 & w_1 \cdot C_1 & 0 & w_2 \cdot C_1 & 0 & 0 & 2 & 2 \\[4pt]
A^a & 0 & 0 & 0 & 0 & 0 & 0 & -2 & 0 & 2 \\[4pt]
A^b & 0 & 0 & 0 & 0 & 0 & 0 & 0 & -2 & -2
\end{array}
\]

\subsection{Bridgeland stability conditions}
In this section, we recall some basic definitions and properties of Bridgeland stability conditions. For more details, we refer to \cite{BriOG:2007,BriK3:2008,Macri-Schmidt:2017}. 

A Bridgeland stability condition on $X$ is a pair $\sigma=(Z,\mathcal{A}),$ where $\mathcal{A}\subset D^b(X)$ is the heart of a bounded $t$-structure in the derived category of coherent sheaves of $X,$ and $Z:K_0(\mathcal{A})\to\mathbb{C}$ is an additive homomorphism that satisfies for every $E\in K_0(\mathcal{A})$,
\begin{itemize}
    \item $\operatorname{Im}(Z(E))\geq 0,$
    \item $\operatorname{Im}(Z(E))=0\implies \operatorname{Re}(Z(E))<0.$
\end{itemize}
The technical parts of Bridgeland stability conditions are the existence of the Harder-Narasimhan filtration and support property (see \cite[Definition 5.2]{Macri-Schmidt:2017}). In the case of surfaces, Bridgeland (see \cite{BriK3:2008}) and Arcara and Bertram (see \cite{Arcara-Bertram:2013}) showed how to construct Bridgeland stability conditions in a slice corresponding to a choice
of an ample divisor $H\in\operatorname{Pic}(X)_{\mathbb{R}}$ and an arbitrary divisor $D\in\operatorname{Pic}(X)_{\mathbb{R}}$. The twisted Mumford slope function is defined as follows,
\begin{align*}
    \mu_{H,D}(E)=\frac{H\cdot\operatorname{ch}_1^D(E)}{H^2\cdot\operatorname{ch}_0^D(E)}.
\end{align*}
The twisted Chern characters are defined as follows
\begin{align*}
    \operatorname{ch}_0^D(E)= \operatorname{ch}_0(E), \ \operatorname{ch}_1^D(E)=\operatorname{ch}_1(E)-\operatorname{ch}_0(E)D, \ \operatorname{ch}_2^D(E)=\operatorname{ch}_2(E)-\operatorname{ch}_1(E)\cdot D+\frac{1}{2}\operatorname{ch}_0(E)D^2.
\end{align*}
For  a given real number $s\in\mathbb{R},$ there is a torsion pair $(\mathcal{T}_{s},\mathcal{F}_{s})$ in $\operatorname{Coh}(X)$, where 
\begin{align*}
    \mathcal{T}_{s}&=\{T\in \operatorname{Coh}(X)\ |\ T\text{ is torsion or }\mu_{H,D}(T')> s\text{ for all quotients $T'$ of $T$}\}\\
    \mathcal{F}_{s}&=\{F\in \operatorname{Coh}(X)\ |\ F\text{ is torsion-free and }\mu_{H,D}(F')\leq s\text{ for all proper subsheaves $F'$ of $F$}\}
\end{align*}
Tilting gives a heart $\mathcal{A}_s=\langle \mathcal{F}_{s}[1],\mathcal{T}_s\rangle,$ which is the smallest extension closed full subcategory of $D^b(X)$ containing both $\mathcal{T}_s$ and $\mathcal{F}_s[1].$ For an additional $t\in\mathbb{R}_{>0},$ define the homomorphism
\begin{align*}
    Z_{s,t}(E)=-\operatorname{ch}_2^{D+sH}(E)+\frac{t^2H^2}{2}\operatorname{ch}_0^{D+sH}(E)+iH\cdot \operatorname{ch}_1^{D+sH}(E).
\end{align*}
The $\mu_{s,t}$-slope of an object $E\in\mathcal{A}_s$ is defined by
$$\mu_{s,t}(E)=-\frac{\operatorname{Re}Z_{s,t}(E)}{\operatorname{Im}Z_{s,t}(E)}.$$
Then $\sigma_{s,t}=(Z_{s,t},\mathcal{A}_s)$ is a Bridgeland stability condition on $X.$ The $(H,D)$-slice of stability conditions is the family of stability conditions
\begin{align*}
    \{\sigma_{s,t}\ |\ s,t\in\mathbb{R},t>0\}
\end{align*}
parametrized by $(s,t)$ upper half plane. This upper half plane admits a wall-and-chamber decomposition such that stable objects of Chern character $v$ cannot destabilize unless a wall is crossed. Maciocia\cite{Maciocia:2014} showed that these walls are either the vertical line $s=\mu_{H,D}(v)$ or nested semicircles. If $W_1$ and $W_2$ are two semicircular numerical walls left of the vertical wall with centers $(s_{W_1},0)$ and $(s_{W_2},0)$, then $W_2$ is nested inside $W_1$ if and only if $s_{W_1}<s_{W_2}.$

Given objects $E$ and $F$ in $\mathcal{A}_s$, the numerical wall $W(E,F)$ in the $(H, D)$-slice consisting of stability conditions for which $E$ and $F$ have the same $\mu_{s,t}$-slope has center $s_0$ and radius $\rho$ satisfying
\begin{align*}
    s_0&=\frac{1}{2}\left(\mu_{H,D}(E)+\mu_{H,D}(F)\right)-\frac{\Delta_{H,D}(E)-\Delta_{H,D}(F)}{\mu_{H,D}(E)-\mu_{H,D}(F)}\\
    \rho^2&=\left(\mu_{H,D}(E)-s_0\right)^2-2\Delta_{H,D}(E).
\end{align*}

Here $\Delta_{H,D}(E)=\frac{1}{2}\mu_{H,D}(E)^2-\frac{\operatorname{ch}_2^D(E)}{H^2\operatorname{ch}_0^D(E)}.$ In particular, if $I_Z$ is the ideal sheaf of a length n scheme and $E=L\otimes I_{Z'}$ for
some line bundle $L$ and $0$-dimensional scheme $Z'$ of length $m$, then the center $s_0$ of $W(E,I_Z)$ is given by
\begin{align*}
    s_0=\frac{n-m+\frac{L^2}{2}-L\cdot D}{L\cdot H}.
\end{align*}

\begin{lemma}{\cite{BriK3:2008}}
    (Large volume limit) Fix divisors $(H,D)$ giving a slice in $\operatorname{Stab}(X)$. If $E\in\mathcal{A}_s$ has $\operatorname{ch}(E)=v$ (with $\operatorname{ch}_0(v)>0$), then $E$ is $\sigma_{s,t}$-stable for $t\gg 0$ if and only if $E$ is a $(H,D-K_X/2)$-twisted Gieseker semistable sheaf.
    
    Moreover, in the quadrant of the $(H, D)$-slice left of the vertical wall there is a
    largest semicircular wall for $v$, called the Gieseker wall. For all $(s,t)$ between this wall and the vertical wall, the Bridgeland moduli space $M_{\sigma_{s,t}}(v)$ coincides with the moduli space $M_{H,D-K_X/2}(v)$ of $(H,D-K_X/2)$-twisted Gieseker semistable sheaves.
\end{lemma}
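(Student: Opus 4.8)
\medskip
\noindent\textbf{Proof proposal.} This is a classical statement of Bridgeland, so I will only indicate the strategy. There are really two assertions: (i) for an object with $\operatorname{ch}_0(E)>0$, $\sigma_{s,t}$-stability coincides with $(H,D-K_X/2)$-twisted Gieseker semistability once $t\gg 0$, and (ii) the set of such $(s,t)$ to the left of the vertical wall is exactly the chamber above one largest semicircular wall. The plan for (i) is to extract the $t\to\infty$ asymptotics of the central charge $Z_{s,t}$ and check that the induced ordering of $\mu_{s,t}$-slopes degenerates to the lexicographic order --- first by Mumford slope, then by a secondary discriminant-type quantity --- that defines twisted Gieseker stability; the plan for (ii) is to combine Maciocia's nested-semicircle description of the walls with a boundedness input from the support property.

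For (i), I would first reduce to comparing honest sheaves. To the left of the vertical wall the objects of $\mathcal{F}_s[1]$ have $\sigma_{s,t}$-phase tending to $1$ as $t\to\infty$, while positive-rank sheaves in $\mathcal{T}_s$ have phase tending to $0$; so for $t\gg 0$ a cohomology-sequence chase shows that a $\sigma_{s,t}$-semistable object of the class in question is a sheaf and that its destabilizing $\mathcal{A}_s$-subobjects are sheaves as well. Then I would compute
\begin{align*}
\operatorname{Re}Z_{s,t}(E)&=\tfrac{t^2H^2}{2}\operatorname{ch}_0(E)-\operatorname{ch}_2^{D+sH}(E), & \operatorname{Im}Z_{s,t}(E)&=H\cdot\operatorname{ch}_1^{D+sH}(E),
\end{align*}
and read off that, for a sub/quotient sheaf $F$, $\mu_{s,t}(F)\le\mu_{s,t}(E)$ holds for all $t\gg 0$ precisely when either $\mu_{H,D}(F)<\mu_{H,D}(E)$, or the Mumford slopes agree and $\operatorname{ch}_2^{D+sH}(F)/(H\cdot\operatorname{ch}_1^{D+sH}(F))\le\operatorname{ch}_2^{D+sH}(E)/(H\cdot\operatorname{ch}_1^{D+sH}(E))$. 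A short computation shows both tests are independent of $s$, and --- after expanding $\chi$ by Hirzebruch--Riemann--Roch --- that the secondary test is exactly the comparison of reduced Hilbert polynomials defining $(H,D-K_X/2)$-twisted Gieseker stability, the $K_X/2$ being the $\operatorname{Td}_1$ contribution (and being invisible on a K3, where $K_X=0$). To make a single threshold $t_0$ serve for all potential destabilizers at once, I would invoke boundedness: the support property pins down the Chern characters that can occur as Harder--Narasimhan factors of $E$ to finitely many, each with an explicit bound on $t$.

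For (ii), Maciocia's theorem gives that the numerical walls for $v=\operatorname{ch}(E)$ in the $(H,D)$-slice, apart from the vertical wall $s=\mu_{H,D}(v)$, are semicircles centred on the $s$-axis to its left and totally ordered by nesting. I would then argue that the \emph{actual} walls lie in a bounded region: on an actual wall $W(E,F)$ the pieces $F$ and $E/F$ are themselves $\sigma$-semistable, hence satisfy the Bogomolov-type inequality $\Delta_{H,D}\ge 0$ supplied by the support property, which together with the constraints on $\operatorname{ch}(F)$ imposed by membership in $\mathcal{A}_s$ leaves only finitely many possibilities for $\operatorname{ch}(F)$ and hence bounds the radii of the corresponding semicircles. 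Therefore there is a largest such wall, the Gieseker wall, and by (i) every $(s,t)$ strictly between it and the vertical wall gives a stability condition with $M_{\sigma_{s,t}}(v)=M_{H,D-K_X/2}(v)$.

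The hard part is the reduction to sheaves together with the uniformity in $t$: a priori a destabilizing $\mathcal{A}_s$-subobject need not be a sheaf, and there is an a priori infinite list of numerical candidates, so the crux is the boundedness of the set of classes that can appear as Harder--Narasimhan factors --- where the support property (equivalently, the Bogomolov--Gieseker discriminant bound) does the essential work --- which then allows the asymptotic slope computation to be applied uniformly. A secondary subtlety is keeping careful track of the $K_X$-twist so that the statement is correct for an arbitrary surface rather than only in the K3 case where it disappears.
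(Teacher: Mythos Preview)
The paper does not give its own proof of this lemma; it is stated as background material with a citation to \cite{BriK3:2008} and left unproved. Your sketch is a correct outline of the standard argument in the literature (Bridgeland, Arcara--Bertram, Bayer--Macr\`{i}): the $t\to\infty$ asymptotics of $\mu_{s,t}$ recover the lexicographic ordering defining twisted Gieseker stability, the reduction to sheaves uses that objects of $\mathcal{F}_s[1]$ have phase tending to $1$, and the existence of a largest actual wall follows from Maciocia's nesting together with the boundedness of destabilizing classes supplied by the support property. There is nothing further in the paper to compare against.
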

Since $v=(1,0,-n)$ is the Chern character for an ideal sheaf $I_Z\in X^{[n]}$ of $n$ points on $X$, following the above lemma, for a stability condition $\sigma$ lying above the Gieseker wall, we have $M_{\sigma}(v)\cong X^{[n]}.$ Let us briefly explain our strategy.\\

\subsection{Strategy:}\label{strategy} Following the notation in \S\ref{Hilbscheme}, suppose $\Lambda\subset N^1(X^{[n]})$ is the cone of all divisors on $X^{[n]}$ that have non-negative intersections with $w_{1,[n]},w_{2,[n]}$ and $C_0$. Clearly, $\operatorname{Nef}(X^{[n]})$ is contained in $\Lambda$. As usual, we conflate $\operatorname{Nef}(X)$ with its image in $\operatorname{Nef}(X^{[n]})\subset N^1(X^{[n]})$ under the map $L\mapsto L^{[n]}$.

We then show that $\Lambda$ is the cone in $N^1(X^{[n]})$ spanned by $\operatorname{Nef}(X^{[n]})$ and another divisor $D.$ The final step is to show that $D$ is a nef divisor using the following result.
\begin{proposition}{\cite[Proposition 3.8]{Bol+:2016}}\label{main Bol+ theorem}
Let $X$ be a smooth projective surface with irregularity
zero, $H$ an ample divisor, and $D$ an antieffective $\mathbb{Q}$-divisor. Let $\sigma$ be a stability condition lying on a numerical wall with center $s_W$ in the Gieseker chamber in the $(H,D)$-slice corresponding
to the Chern character $v=(1,0,-n)$. Then the divisor
$$\frac{1}{2}K_X^{[n]}-s_WH^{[n]}-D^{[n]}-\frac{1}{2}B$$
is nef.
\end{proposition}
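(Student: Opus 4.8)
The plan is to realize the displayed divisor, up to a positive scalar, as the nef class furnished by the Bayer--Macr\`i Positivity Lemma on a Bridgeland moduli space, and then to compute that class in $N^1(X^{[n]})$ by testing it against a handful of curves. Concretely, write $\sigma=\sigma_{s,t}$ with $(s,t)$ on the given numerical wall $W$ of centre $s_W$. Since $v=(1,0,-n)$ and $W$ lies in the Gieseker chamber, the large volume limit lemma recalled above gives $M_{\sigma}(v)\cong X^{[n]}$, and the universal ideal sheaf $\mathcal{I}\subset\mathcal{O}_{X\times X^{[n]}}$ is a flat family of $\sigma$-semistable objects (indeed $\sigma$-stable, away from the Gieseker wall itself). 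By \cite[Lemma 3.3]{Bayer-Macri-PositivityLemma} this produces a nef numerical divisor class $\ell_\sigma$ on $X^{[n]}$, characterized on integral curves $C$ by
\[
\ell_\sigma\cdot C \;=\; \operatorname{Im}\!\left(-\,\frac{Z_{s,t}\!\left(\Phi_{\mathcal{I}}(\mathcal{O}_C)\right)}{Z_{s,t}(v)}\right),
\qquad \Phi_{\mathcal{I}}(-):=Rp_{X,*}\!\left(\mathcal{I}\otimes p_{X^{[n]}}^{*}(-)\right).
\]
So it suffices to show that $\ell_\sigma$ is a positive multiple of $\tfrac12 K_X^{[n]}-s_WH^{[n]}-D^{[n]}-\tfrac12 B$.

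The heart of the argument is the explicit computation of $\ell_\sigma$. Expanding the imaginary part and using that $Z_{s,t}$ and $w\mapsto\operatorname{ch}\Phi_{\mathcal{I}}(w)$ are additive, one sees that $\ell_\sigma$ is, up to the positive factor $\operatorname{Im}Z_{s,t}(v)/|Z_{s,t}(v)|^{2}$ (note $\operatorname{Im}Z_{s,t}(v)>0$ since $I_Z\in\mathcal{A}_s$), a fixed combination of the tautological classes $b_0,b_1(-),b_2$ given on curves by $b_0\cdot C=\operatorname{ch}_0\Phi_{\mathcal{I}}(\mathcal{O}_C)$, $b_1(L)\cdot C=L\cdot\operatorname{ch}_1\Phi_{\mathcal{I}}(\mathcal{O}_C)$, $b_2\cdot C=\operatorname{ch}_2\Phi_{\mathcal{I}}(\mathcal{O}_C)$; a short manipulation shows
\[
\ell_\sigma\;\propto\;-\,b_2\;+\;b_1(D)\;+\;\left(s+\mu_{s,t}(v)\right)b_1(H)\;-\;n\,b_0 .
\]
I would then pin down the $b_i$ by Grothendieck--Riemann--Roch applied to the two families of test curves used throughout: a fibre $C_0$ of the Hilbert--Chow morphism, and $C_{[n]}$ for a smooth curve $C\subset X$ carrying a $g^1_n$. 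The push--pull computation of $\operatorname{ch}\Phi_{\mathcal{I}}(\mathcal{O}_{C_0})$ and $\operatorname{ch}\Phi_{\mathcal{I}}(\mathcal{O}_{C_{[n]}})$ yields $b_1(L)=-L^{[n]}$ for every $L\in N^1(X)$ and $-b_2-n\,b_0=\tfrac12 K_X^{[n]}-\tfrac12 B$, the $K_X$-term being exactly the shift $D\leadsto D-K_X/2$ built into the twisted Gieseker moduli problem. Substituting, $\ell_\sigma$ is a positive multiple of $\tfrac12 K_X^{[n]}-\left(s+\mu_{s,t}(v)\right)H^{[n]}-D^{[n]}-\tfrac12 B$. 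It remains to note that $s+\mu_{s,t}(v)=s-\operatorname{Re}Z_{s,t}(v)/\operatorname{Im}Z_{s,t}(v)$, and that a direct calculation with the explicit equation of the semicircular wall $W$ recorded above (equivalently: $Z_{s,t}(v)$ and $Z_{s,t}(E)$ have the same phase for the object $E$ defining $W$) shows this combination is constant along $W$ with value $s_W$. Combining the two displayed formulas gives the proposition.

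Nefness of $\ell_\sigma$ is immediate once the set-up is in place, so the real work---and the step I expect to be the main obstacle---is the middle one: carrying out the Grothendieck--Riemann--Roch bookkeeping for $\operatorname{ch}\Phi_{\mathcal{I}}(\mathcal{O}_{C_0})$ and $\operatorname{ch}\Phi_{\mathcal{I}}(\mathcal{O}_{C_{[n]}})$ precisely enough to determine $b_0$, $b_1(-)$ and $b_2$ in the basis $\{H^{[n]},D^{[n]},K_X^{[n]},B\}$, keeping careful track of the twist by $D$ and of the $K_X$-correction. After that, the collapse of the $(s,t)$-dependence onto the wall centre $s_W$ is a one-line verification.
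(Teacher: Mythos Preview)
The paper does not actually prove this proposition: it is quoted verbatim from \cite[Proposition~3.8]{Bol+:2016} and invoked as a black box in the proof of Theorem~\ref{thm:Nef_cone_case1}. There is therefore no ``paper's own proof'' to compare against.

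That said, your sketch is a faithful outline of how the result is established in \cite{Bol+:2016}: one identifies $M_\sigma(v)\cong X^{[n]}$ via the large volume limit, applies the Bayer--Macr\`i Positivity Lemma to obtain a nef class $\ell_\sigma$, computes $\ell_\sigma$ by Grothendieck--Riemann--Roch against test curves, and then observes that the resulting expression depends on $(s,t)$ only through the combination collapsing to the wall centre $s_W$. The only place where your write-up could be tightened is the bookkeeping: the identification of $-b_2-n\,b_0$ with $\tfrac12 K_X^{[n]}-\tfrac12 B$ and the verification that $s+\mu_{s,t}(v)$ is constant along $W$ with value $s_W$ both deserve an explicit one-line computation rather than a pointer, but the approach is the standard one and there is no genuine gap.
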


\section{The nef cones}\label{sec:NefCones}
In this section, we shall compute the nef cones of Hilbert schemes of points in all three cases of Proposition \ref{Prop. 2.3}. 

\subsection{The nef cone for large $n$}\label{subsec:large_n}

We begin with case I. The first goal is to describe $\Lambda$ (defined in subsection \ref{strategy}) more explicitly.

\begin{lemma}\label{lem:characterize_Lambda}
	Suppose $n>1$ is an integer. Then the cone $\Lambda$ is the cone in $N^1(X^{[n]})$ spanned by $\operatorname{Nef}(X)$ and the class $$\frac{n}{k}w_1^{[n]}+\frac{n}{k}w_2^{[n]}-\frac{1}{2}B.$$
\end{lemma}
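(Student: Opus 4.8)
The plan is to exhibit $\Lambda$ as a closed, full-dimensional cone in the two-dimensional-over-$\Nef(X)$ space $N^1(X^{[n]})$, and identify its extremal rays. Write a general divisor class on $X^{[n]}$ as $D = L^{[n]} - \tfrac{b}{2}B$ with $L \in N^1(X)_{\mathbb R}$ and $b \in \mathbb R$. Using the intersection table in \S\ref{Hilbscheme}, compute $D \cdot C_0 = b$, $D \cdot w_{1,[n]} = L\cdot w_1 - b\,(g(w_1)-1+n) = L\cdot w_1 - b(n-1)$ (since $g(w_1)=1$ in case I), and similarly $D\cdot w_{2,[n]} = L\cdot w_2 - b(n-1)$. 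Wait — I should double-check: the table gives $C_{[n]}\cdot B = 2g(C)-2+2n$, and $g(w_i)=1$, so $w_{i,[n]}\cdot B = 2n$; hence $D\cdot w_{i,[n]} = L\cdot w_i - \tfrac{b}{2}(2n) = L\cdot w_i - bn$. So the defining inequalities of $\Lambda$ are $b \ge 0$, $L\cdot w_1 \ge bn$, and $L\cdot w_2 \ge bn$.

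First I would handle the boundary face $b = 0$: there $\Lambda$ restricts to $\{L^{[n]} : L\cdot w_1 \ge 0,\ L\cdot w_2 \ge 0\}$, which is exactly $\Nef(X)$ pulled back (recall $\Nef(X) = \operatorname{Cone}(w_1,w_2)$ in case I, and $w_i\cdot w_j = k\delta_{i\ne j}$, so $w_1\cdot w_1 = 0$, $w_1\cdot w_2 = k$ — thus $L = aw_1+cw_2$ is nef iff $a,c\ge 0$, matching $L\cdot w_1 = ck \ge 0$, $L\cdot w_2 = ak\ge 0$). So one extremal face of $\Lambda$ is $\Nef(X^{[n]})\cap\{b=0\}$, spanned by $w_1^{[n]}$ and $w_2^{[n]}$. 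Next I would find the remaining extremal ray, which must lie on the locus where two of the three inequalities are equalities with the third strict-or-equal; the natural candidate is $L\cdot w_1 = L\cdot w_2 = bn$ with $b>0$. Normalizing $b=1$ and solving $L\cdot w_1 = L\cdot w_2 = n$ in the basis $w_1, w_2$: if $L = aw_1 + cw_2$ then $L\cdot w_1 = ck = n$ and $L\cdot w_2 = ak = n$, so $a = c = n/k$, giving $L = \tfrac nk(w_1+w_2)$ and the ray through $\tfrac nk w_1^{[n]} + \tfrac nk w_2^{[n]} - \tfrac12 B$. It then remains to check this ray actually lies in $\Lambda$ (it does, by construction $b=1\ge 0$ and both other inequalities are equalities) and that together with the face $\{b=0\}$ it generates all of $\Lambda$: since $\Lambda$ is cut out by three half-spaces in a space that is $(\dim N^1(X)+1)$-dimensional, and the three facet normals are in "general enough" position (the $b\ge 0$ facet meeting the two $L\cdot w_i \ge bn$ facets), $\Lambda$ is simplicial with extremal rays $w_1^{[n]}$, $w_2^{[n]}$, and $\tfrac nk w_1^{[n]}+\tfrac nk w_2^{[n]}-\tfrac12 B$, i.e. $\Lambda = \operatorname{Cone}(\Nef(X), \tfrac nk w_1^{[n]}+\tfrac nk w_2^{[n]}-\tfrac12 B)$.

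The main obstacle is bookkeeping rather than conceptual: getting the intersection numbers exactly right (in particular the coefficient $2g(C)-2+2n$ specialized to $g(w_i)=1$) and verifying that the three bounding half-spaces really do produce a simplicial cone whose extremal rays are the claimed three — equivalently that no inequality is redundant and the rays are as listed. One should confirm $n>1$ is exactly what is needed: for $n=1$ the class $w_{i,[1]}$ degenerates (a $g^1_1$ means $C\cong\mathbb P^1$, forcing $g(w_i)=0$, which fails in case I) so the hypothesis $n>1$ is used precisely to ensure $w_{1,[n]}$ and $w_{2,[n]}$ exist as honest curve classes with the tabulated intersection numbers; I would state this explicitly. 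Modulo these checks, the lemma follows by elementary polyhedral geometry.
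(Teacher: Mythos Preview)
Your proposal is correct and follows essentially the same approach as the paper: both compute the defining inequalities $b\ge 0$ and $L\cdot w_i\ge bn$ from the intersection table (using $g(w_i)=1$ in case~I), and then identify the cone as spanned by $\Nef(X)$ and the single additional ray $\tfrac{n}{k}w_1^{[n]}+\tfrac{n}{k}w_2^{[n]}-\tfrac12 B$. The only cosmetic difference is that the paper finishes by explicitly decomposing an arbitrary $D\in\Lambda$ as a nef class plus a multiple of the candidate ray, whereas you argue that the three half-spaces cut out a simplicial cone and read off its three extremal rays; these are equivalent, and your remark on why $n>1$ is needed (existence of a $g^1_n$ on the genus~$1$ curves $w_i$) is a nice clarification the paper leaves implicit.
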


\begin{proof}
	Let $D=M^{[n]}-xB\in \Lambda$ be arbitrary, with $M\in N^1(X)$ and $x\in\mathbb{R}$. Since $0\leq D\cdot C_0=2x$, we have $x\geq 0.$
	
	From the definition of $\Lambda,$ we get $$0\leq D\cdot w_{i,[n]}=M\cdot w_i-x(2g(w_i)+2n-2)=M\cdot w_i-2nx,$$ which yields \begin{equation}\label{eq:M_is_very_nef}
	    M\cdot w_i\geq 2nx\geq 0
	\end{equation} for $i=1,2.$
	
	Hence, any $D\in \Lambda$ is of the form $$D=M^{[n]}-xB$$ with $x\geq 0$ and $M\in\operatorname{Nef}(X)$.
	Scaling by $\mathbb{Q}_{>0}$ we now assume that if $D\in\Lambda\backslash\operatorname{Nef}(X)$, then 
    $$D=M^{[n]}-\frac{1}{2}B$$
    with $M\in\operatorname{Nef}(X)$. Write
    $$D=\left(M^{[n]}-\alpha w_1^{[n]}-\alpha w_2^{[n]}\right)+\left(\alpha w_1^{[n]}+\alpha w_2^{[n]}-\frac{1}{2}B\right)$$
    where $\alpha:=n/k$. Notice 
    $$(M-\alpha w_1-\alpha w_2)\cdot w_1=M\cdot w_1-\alpha k=M\cdot w_1-n\geq 0$$
    by inequality \eqref{eq:M_is_very_nef} above. Similarly, $$(M-\alpha w_1-\alpha w_2)\cdot w_2\geq0.$$ 
    Since a divisor on $X$ is nef if and only if it has non-negative intersections with $w_1$ and $w_2$, it follows that $M-\alpha w_1-\alpha w_2$ is nef on $X$, from which we see that $M^{[n]}-\alpha w_1^{[n]}-\alpha w_2^{[n]}$ is nef on $X^{[n]}$. Thus, $\Lambda$ is contained in the cone in $N^1(X^{[n]})$ spanned by $\operatorname{Nef}(X)$ and $$\alpha w_1^{[n]}+\alpha w_2^{[n]}-\frac{1}{2}B.$$

    On the other hand, we note that $$\alpha w_1^{[n]}+\alpha w_2^{[n]}-\frac{1}{2}B$$ meets each of $w_{1,[n]},w_{2,[n]},$ and $C_0$ non-negatively, and therefore belongs to the cone $\Lambda$.
\end{proof}

As explained in the strategy, we shall now prove that $\frac{n}{k}w_1^{[n]}+\frac{n}{k}w_2^{[n]}-\frac{1}{2}B$ itself is a nef divisor on $X^{[n]}$ for large $n.$

\begin{theorem}\label{thm:Nef_cone_case1}
	For any integer $n\geq9k/8$, the nef cone of the Hilbert scheme $X^{[n]}$ is the cone $\Lambda$ constructed above. Dually, the Mori cone $\overline{\operatorname{NE}}(X^{[n]})$ is the cone in $N_1(X^{[n]})$ spanned by the curve classes $w_{1,[n]},w_{2,[n]}$ and $C_0$.
\end{theorem}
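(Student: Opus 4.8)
The plan is to reduce the assertion to the nefness of a single divisor class and then verify that with the Positivity Lemma (Proposition \ref{main Bol+ theorem}). By Lemma \ref{lem:characterize_Lambda}, $\Lambda$ is the cone spanned by $\Nef(X)$ and the class $D:=\tfrac nk w_1^{[n]}+\tfrac nk w_2^{[n]}-\tfrac12 B$. For nef $L$ on $X$ the class $L^{[n]}$ is the pullback, under the Hilbert--Chow morphism $X^{[n]}\to X^{(n)}$, of the nef class $L^{(n)}$ on the symmetric product, so $\Nef(X)\subseteq\Nef(X^{[n]})$; and $\Nef(X^{[n]})\subseteq\Lambda$ by the very definition of $\Lambda$. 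Hence it suffices to prove that $D$ is nef: then $\Nef(X^{[n]})=\Lambda$, and since $\Lambda$ is cut out by non-negativity against $w_{1,[n]},w_{2,[n]}$ and $C_0$, dualizing yields $\overline{\NE}(X^{[n]})=\operatorname{Cone}(w_{1,[n]},w_{2,[n]},C_0)$.

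To show $D$ is nef I work in the $(H,0)$-slice with $H:=w_1+w_2$, which is ample in case I, and with Chern character $v=(1,0,-n)=\operatorname{ch}(I_Z)$ of an ideal sheaf of $n$ points. Because $X$ is a K3 surface, $K_X=0$, so the nef divisor produced by Proposition \ref{main Bol+ theorem} from a numerical wall of center $s_W$ lying in the Gieseker chamber is $-s_WH^{[n]}-\tfrac12 B$; for $s_W=-n/k$ this is exactly $D$. So the whole problem reduces to showing that $s=-n/k$ is the center of a numerical wall in the Gieseker chamber for $v$.

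By the wall-center formula, $W(\mathcal{O}_X(-w_1),I_Z)$ — and, by the $w_1\leftrightarrow w_2$ symmetry of the intersection form in case I, also $W(\mathcal{O}_X(-w_2),I_Z)$ — has center $\tfrac{n+\frac12 w_1^2}{-w_1\cdot H}=-\tfrac nk$ and squared radius $\bigl(\tfrac nk-\tfrac12\bigr)^2-\tfrac14=\tfrac nk\bigl(\tfrac nk-1\bigr)$, which is a genuine semicircle, lying in $\{s<-\tfrac12\}$ to the left of the vertical wall, precisely when $n>k$. Moreover, since both $w_i$ are elliptic in case I, a general member $C$ of the pencil $|w_1|$ carries length-$n$ subschemes $Z$, and for such $Z$ the exact sequence $0\to\mathcal{O}_X(-w_1)\to I_Z\to I_{Z/C}\to0$ exhibits $\mathcal{O}_X(-w_1)$ as an honest subobject of $I_Z$ in $\mathcal{A}_s$ along this wall, of equal $\mu_{s,t}$-slope; hence the wall is actual, so the Gieseker wall has center at most $-n/k$. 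To place the numerical wall at $-n/k$ inside the Gieseker chamber one needs the reverse inequality: that no object of class $v$ is destabilized along a strictly larger wall, so that the Gieseker wall has center exactly $-n/k$ and $s=-n/k$ lies on its boundary. Granting this, Proposition \ref{main Bol+ theorem} applies at a stability condition on that wall, $D$ is nef, and the theorem follows.

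The crux is the ``no larger wall'' step, and it is here that the hypothesis $n\ge 9k/8$ enters. One runs through the possible destabilizing sequences $0\to E\to I_Z\to Q\to0$ in $\mathcal{A}_s$ whose Chern characters are admissible for objects of $\mathcal{A}_s$ near the relevant wall — twisted ideal sheaves $\mathcal{O}_X(L)\otimes I_W$ and their shifts, together with the genuinely higher-rank candidates — and bounds the radius (equivalently the center) of each such potential wall using the Bogomolov inequality $\Delta_{H,0}(\,\cdot\,)\ge0$ on the $\sigma$-semistable factors, the Hodge index theorem, and the numerology $w_1^2=w_2^2=0$, $w_1\cdot w_2=k$. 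The content of the estimate is that once $n\ge 9k/8$ none of these walls has center smaller than $-n/k$, so $W(\mathcal{O}_X(-w_i),I_Z)$ really is the Gieseker wall. Carrying out this case analysis — in particular handling the higher-rank destabilizers uniformly rather than case by case — is the main work of the proof.
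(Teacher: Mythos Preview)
Your reduction to showing that $D=\tfrac{n}{k}w_1^{[n]}+\tfrac{n}{k}w_2^{[n]}-\tfrac12 B$ is nef, and your identification of $W(\mathcal{O}_X(-w_i),I_Z)$ as the candidate Gieseker wall, are both correct. But the proof has a genuine gap: you do not actually carry out the ``no larger wall'' step. You write that one runs through the possible destabilizing sequences and bounds each potential wall via Bogomolov and Hodge index, and that ``carrying out this case analysis \dots\ is the main work of the proof'' --- but that work is precisely what is missing. Without it you have not shown that $-n/k$ lies in the closure of the Gieseker chamber, so Proposition~\ref{main Bol+ theorem} cannot be invoked.

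The paper avoids this open-ended destabilizer analysis by a different choice of slice: it takes $H=\bigl(\tfrac{n}{k}-1\bigr)(w_1+w_2)$ and the \emph{nonzero} anti-effective twist $D=-(w_1+w_2)$, rather than your $(w_1+w_2,0)$. With that choice the machinery of \cite[\S3]{Bol+:2016} applies directly: the set of critical effective divisors is the finite list $\{w_1,w_2,w_1+w_2\}$, the outermost resulting wall is $W(\mathcal{O}_X(-w_1),I_Z)$ with center $-1$ and squared radius $\rho^2=\tfrac{n}{n-k}$, and \cite[Theorem~3.6]{Bol+:2016} guarantees this is the Gieseker wall as soon as $\rho^2\ge\varrho=\tfrac{nk}{8(n-k)^2}$, which is exactly $n\ge 9k/8$. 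Then Proposition~\ref{main Bol+ theorem} with $s_W=-1$ gives $-(-1)H^{[n]}-D^{[n]}-\tfrac12 B=\tfrac{n}{k}(w_1^{[n]}+w_2^{[n]})-\tfrac12 B$, the same $D$ you were aiming for. In your $(H,0)$-slice there are no critical effective divisors to feed into \cite[Theorem~3.6]{Bol+:2016}, which is why you were forced toward an ad hoc case analysis; the paper's choice of twist is what converts the problem into a finite, verifiable check.
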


\begin{proof}
	Let $n>k$ be any integer. Let $H$ be the ample divisor class $$\left(\frac{n}{k}-1\right)w_1+\left(\frac{n}{k}-1\right)w_2$$ on $X$, and let $D$ be the anti-effective divisor class $-w_1-w_2$. The corresponding set of critical effective divisors (as defined in \cite[\S3, page 917]{Bol+:2016}) is seen to be $\operatorname{CrDiv}(H,D)=\left\{w_1+w_2,w_1,w_2\right\}$. Calculating the centers of the numerical walls for the Chern character $\mathbf{v}(I_Z)$ (where $Z$ is a length $n$ subscheme of $X$) in the $(H,D)$-slice given by $\mathcal{O}_X(-F)$ for each of the three critical effective divisors $F$, we get $$s_{W(\mathcal{O}(-w_1-w_2),I_Z)}=-1/2 \text{ and } s_{W(\mathcal{O}(-w_1),I_Z)}=s_{W(\mathcal{O}(-w_2),I_Z)}=-1,$$
    where $(s_W,0)$ denotes the center of the numerical wall $W$. Since the numerical walls for $\mathbf{v}(I_Z)$ are nested semicircles, it follows that $W(\mathcal{O}_X(-w_1),I_Z)$ is the largest numerical wall given by any critical effective divisor. We calculate the radius $\rho$ of the wall $W(\mathcal{O}_X(-w_1),I_Z)$ and find that it satisfies $$\rho^2=(\mu_{H,D}(I_Z)-(-1))^2-2\Delta_{H,D}(I_Z)=\frac{n}{n-k}.$$ 
    
    We see that $\rho^2\geq\varrho$ for $n\geq9k/8$, where $$\varrho:=\frac{2nH^2+(H\cdot D)^2-H^2D^2}{8(H^2)^2}=\frac{nk}{8(n-k)^2}.$$ By \cite[Theorem 3.6]{Bol+:2016}, $W(\mathcal{O}_X(-w_1),I_Z)$ is the Gieseker wall for $\mathbf{v}(I_Z)$ in the $(H,D)$-slice.
	
	Now, Theorem \ref{main Bol+ theorem} show that the divisor class $$\frac{1}{2}K_X^{[n]}-(-1)A^{[n]}-P^{[n]}-\frac{1}{2}B^{[n]}=\frac{n}{k}w_1^{[n]}+\frac{n}{k}w_2^{[n]}-\frac{1}{2}B^{[n]}$$ is nef on $X^{[n]}$. By Lemma \ref{lem:characterize_Lambda}, $\Lambda\subseteq\operatorname{Nef}(X^{[n]})$. Since the other inclusion $\operatorname{Nef}(X^{[n]})\subseteq\Lambda$ is obvious, the assertion follows.
\end{proof}

Similarly, we also find the nef cones for cases II and III (Proposition \ref{Prop. 2.3}).

\begin{theorem}\label{thm:Nef_cone_case2and3}
    \begin{enumerate}
        \item Case II: For any large enough integer $n$, the nef cone of the Hilbert scheme $X^{[n]}$ is the cone $\Lambda$ constructed above. Dually, the Mori cone $\overline{\operatorname{NE}}(X^{[n]})$ is the cone in $N_1(X^{[n]})$ spanned by the curve classes $w_{1,[n]},w_{2,[n]}$ and $C_0$.

        More precisely, the above assertions hold whenever $n$ satisfies the inequalities $n>k$ and $$32(k^2+2k+1)n^4-4k(25k^2+41k+16)n^3+4k^2(26k^2+33k+7)n^2-4k^3(9k^2+8k-1)n-k^4\geq0.$$

    \item Case III: Assume $k\geq 3$. For any large enough integer $n$, the nef cone of the Hilbert scheme $X^{[n]}$ is the cone $\Lambda$ constructed above. Dually, the Mori cone $\overline{\operatorname{NE}}(X^{[n]})$ is the cone in $N_1(X^{[n]})$ spanned by the curve classes $w_{1,[n]},w_{2,[n]}$ and $C_0$.

    More precisely, the above assertions hold whenever $n$ satisfies the inequalities $n\geq k\geq 3$ and $$8n^2+(2-9k)n+8\geq0.$$
    \end{enumerate}
\end{theorem}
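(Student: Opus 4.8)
The plan is to follow the two-step template from Case~I, i.e.\ the combination of Lemma~\ref{lem:characterize_Lambda} and Theorem~\ref{thm:Nef_cone_case1}: first describe the cone $\Lambda$ explicitly in each of Cases~II and~III, and then exhibit the unique extremal ray of $\Lambda$ that does not come from $\operatorname{Nef}(X)$ as a nef class by means of Proposition~\ref{main Bol+ theorem}.

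\emph{Step 1 (describing $\Lambda$).} I would repeat the opening of the proof of Lemma~\ref{lem:characterize_Lambda} essentially verbatim. If $D=M^{[n]}-xB\in\Lambda$, then $0\le D\cdot C_0=2x$ forces $x\ge 0$, and $0\le D\cdot w_{i,[n]}=M\cdot w_i-x(2g(w_i)-2+2n)$ forces $M\cdot w_i\ge x(2g(w_i)-2+2n)\ge 0$ for $i=1,2$; since a divisor on a Picard rank~$2$ surface is nef if and only if it meets $w_1$ and $w_2$ non-negatively, $M$ is nef on $X$. Rescaling, it suffices to treat $D=M^{[n]}-\tfrac12 B$. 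In Case~II one has $g(w_1)=0$ and $g(w_2)=1$, so the constraints become $M\cdot w_1\ge n-1$ and $M\cdot w_2\ge n$; setting $N:=\tfrac{n}{k}w_1+\tfrac{(k+2)n-k}{k^2}w_2$, the unique class with $N\cdot w_1=n-1$ and $N\cdot w_2=n$, one checks that $N$ is nef and that $M-N$ is nef, whence $\Lambda=\operatorname{Cone}\bigl(\operatorname{Nef}(X),\,N^{[n]}-\tfrac12 B\bigr)$ with $N^{[n]}-\tfrac12 B$ spanning an extremal ray. In Case~III one has $g(w_1)=g(w_2)=0$, so the constraints read $M\cdot w_1,\,M\cdot w_2\ge n-1$, and the same argument with $N:=\tfrac{n-1}{k-2}(w_1+w_2)$ (well defined because $k\ge 3$) gives $\Lambda=\operatorname{Cone}\bigl(\operatorname{Nef}(X),\,N^{[n]}-\tfrac12 B\bigr)$.

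\emph{Step 2 (the extra ray is nef).} Imitating Theorem~\ref{thm:Nef_cone_case1}, I would take $H:=N-w_1-w_2$ and $D:=-(w_1+w_2)$, with $N$ the class from Step~1. Then $D$ is anti-effective because $w_1+w_2$ is effective, and $H$ is ample precisely under the stated linear hypothesis on $n$: in Case~II one computes $H\cdot w_1=n+1-k$ and $H\cdot w_2=n-k$, so ampleness amounts to $n>k$; in Case~III one has $H=\tfrac{n-k+1}{k-2}(w_1+w_2)$, so (using $k\ge 3$) ampleness amounts to $n\ge k$. I would then compute $\operatorname{CrDiv}(H,D)$ as in \cite[\S3]{Bol+:2016} — which I expect, as in Case~I, to be $\{w_1,w_2,w_1+w_2\}$ — together with the centers of the corresponding numerical walls for $\mathbf{v}(I_Z)$, which should come out to $s_{W(\mathcal{O}(-w_1),I_Z)}=s_{W(\mathcal{O}(-w_2),I_Z)}=-1$ and $s_{W(\mathcal{O}(-w_1-w_2),I_Z)}>-1$. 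Since the walls are nested semicircles, $W:=W(\mathcal{O}_X(-w_1),I_Z)$ — equivalently $W(\mathcal{O}_X(-w_2),I_Z)$, as these numerical walls coincide — is then the outermost wall arising from a critical effective divisor. A computation should give, uniformly in all three cases, $\rho_W^2=1+\tfrac{2(n-k)}{H^2}$ (only $H^2$ varying with the case), and with $\varrho=\tfrac{2nH^2+(H\cdot D)^2-H^2D^2}{8(H^2)^2}$ as in \cite[Theorem~3.6]{Bol+:2016} the inequality $\rho_W^2\ge\varrho$ rearranges to $8(H^2)^2+(14n-16k+D^2)H^2-(H\cdot D)^2\ge 0$. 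Substituting the explicit values of $H^2$ (a quadratic in $n$), $D^2$, and $H\cdot D$ turns this into the displayed quartic in Case~II and into $8n^2+(2-9k)n+8\ge 0$ in Case~III; since both polynomials have positive leading $n$-coefficient, the hypothesis holds automatically for $n\gg 0$. Once it holds, \cite[Theorem~3.6]{Bol+:2016} shows $W$ is the Gieseker wall for $\mathbf{v}(I_Z)$, so Proposition~\ref{main Bol+ theorem} (with $K_X=0$ and $s_W=-1$) gives that $-s_W H^{[n]}-D^{[n]}-\tfrac12 B=(H+w_1+w_2)^{[n]}-\tfrac12 B=N^{[n]}-\tfrac12 B$ is nef. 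Combined with Step~1 this yields $\Lambda\subseteq\operatorname{Nef}(X^{[n]})$; the reverse inclusion is automatic, and the description of $\overline{\operatorname{NE}}(X^{[n]})$ is the dual statement.

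\emph{The main obstacle.} Case~II is the delicate one. Because the surface is not symmetric in $w_1$ and $w_2$, the target class $N$ is not proportional to $w_1+w_2$, so $H^2$ is a genuine quadratic in $n$ with no cancellation — which is exactly why the sharp condition degenerates to a quartic rather than to a clean linear bound like $n\ge 9k/8$. Two checks need care: (i) that $\operatorname{CrDiv}(H,D)$ contains nothing beyond $\{w_1,w_2,w_1+w_2\}$ and that $W(\mathcal{O}(-w_1-w_2),I_Z)$ really is nested inside $W$, so that $W$ is genuinely the Gieseker wall rather than some other potential wall; and (ii) pushing the asymmetric intersection numbers of Case~II through the radius and $\varrho$ computations without dropping a term. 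Case~III, by contrast, is only mildly more involved than Case~I — essentially the same computation with the elliptic curves' self-intersection $0$ replaced by $-2$.
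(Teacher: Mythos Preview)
Your proposal is correct and is precisely what the paper intends: Theorem~\ref{thm:Nef_cone_case2and3} is stated without proof, the paper merely prefacing it with ``Similarly, we also find the nef cones for cases~II and~III,'' and your reconstruction is exactly that ``similarly'' made explicit. Your choices $H=N-(w_1+w_2)$ and $D=-(w_1+w_2)$ with resulting wall center $s_W=-1$ are the direct analogues of the Case~I data, and the computations you outline recover the stated inequalities on the nose (in Case~III your expression $8(H^2)^2+(14n-16k+D^2)H^2-(H\cdot D)^2\ge 0$, after clearing the positive factor $4(n-k+1)^2/(k-2)^2$, expands exactly to $8n^2+(2-9k)n+8\ge 0$; in Case~II the same expression cleared of $k^4$ gives the displayed quartic).
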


\subsection{Bounds for small $n$}\label{subsec:small_n}

When $n$ is small, we use more classical geometric techniques to find bounds on the nef cones of the Hilbert schemes of points on K3 surfaces. For simplicity, we assume the following.

\begin{notation} \label{notation}
For the remainder of this article, we restrict to the case that $X$ is a K3 surface of type I (see Proposition \ref{Prop. 2.3}).
\end{notation}

If $a_1w_1^{[n+1]}+a_2w_2^{[n+1]}-\frac{B^{[n+1]}}{2}$ is nef on $X^{[n+1]}$, then $a_1w_1^{[n]}+a_2w_2^{[n]}-\frac{B^{[n]}}{2}$ is nef on $X^{[n]}$. This is because of the following reason: suppose $a_1w_1^{[n]}+a_2w_2^{[n]}-B^{[n]}/2$ is negative on a curve $C$ on $X^{[n]}$. Then the same curve $C$ can be viewed as a curve on $X^{[n+1]}$ by ``adding'' a fixed general point of $X$ to each scheme on $C$ and this does not change intersection numbers. So, $\Nef(X^{[n]})\supseteq\Nef(X^{[n+1]})$, where we identify $w_i^{[n]}$ with $w_i^{[n+1]}$ for $i=1,2$, and identify $B^{[n]}$ with $B^{[n+1]}$. We are going to use this fact in the proof below.

As mentioned above, we are dealing with case I of K3 surfaces. Suppose $n'\geq9k/8$ and $n'-1<9k/8$, so that the Bridgeland stability method works for $n'$ but not for $n'-1$. We then have the following result.

\begin{proposition}\label{prop:bound_for_small_n_case1}
    Let $X$ be as in \ref{notation}, and let $n'$ be as defined just above. Let $n$ be a positive integer with $1<n<n'$, and $j:=n'-n$. Then the nef cone of $X^{[n]}$ is sandwiched between the cones $\Lambda$ and $\Lambda'$, where $\Lambda$ is the cone in $N^1(X^{[n]})$ spanned by $\Nef(X)$ and $$\frac{n}{k}w_1^{[n]}+\frac{n}{k}w_2^{[n]}-\frac{1}{2}B^{[n]},$$ and $\Lambda'$ is the cone in $N^1(X^{[n]})$ spanned by $\Nef(X)$ and $$\frac{n+j}{k}w_1^{[n]}+\frac{n+j}{k}w_2^{[n]}-\frac{1}{2}B^{[n]}.$$
\end{proposition}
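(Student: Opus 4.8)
The plan is to prove the two containments $\Lambda'\subseteq\Nef(X^{[n]})\subseteq\Lambda$ separately; together with the elementary observation $\Lambda'\subseteq\Lambda$ this is exactly the asserted sandwich. The inclusion $\Nef(X^{[n]})\subseteq\Lambda$ is essentially free: since $n>1$ and $X$ is of case I, each $w_i$ is an elliptic curve and hence carries a $g^1_n$, so $w_{1,[n]}$ and $w_{2,[n]}$ are honest effective (rational) curve classes on $X^{[n]}$, as is $C_0$. Thus $\Nef(X^{[n]})$ lies in the cone of divisor classes meeting $w_{1,[n]},w_{2,[n]},C_0$ non-negatively, which is precisely $\Lambda$ by Lemma~\ref{lem:characterize_Lambda} (this is the strategy cone of \S\ref{strategy} at level $n$, and it coincides with the $\Lambda$ in the statement).

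The substance is the reverse inclusion $\Lambda'\subseteq\Nef(X^{[n]})$, and here I would simply push the large-$n$ answer down. Iterating $\Nef(X^{[m]})\supseteq\Nef(X^{[m+1]})$ — the monotonicity recorded just before the statement, each step coming from adding a fixed general point of $X$ and using the identifications $w_i^{[m]}\leftrightarrow w_i^{[m+1]}$, $B^{[m]}\leftrightarrow B^{[m+1]}$ — for $m=n,n+1,\dots,n'-1$ gives $\Nef(X^{[n]})\supseteq\Nef(X^{[n']})$ under the induced identification $N^1(X^{[n]})\cong N^1(X^{[n']})$. Since $n'\geq 9k/8$, Theorem~\ref{thm:Nef_cone_case1} computes $\Nef(X^{[n']})$ to be the level-$n'$ cone $\Lambda$, which by Lemma~\ref{lem:characterize_Lambda} together with $\Nef(X)=\operatorname{Cone}(w_1,w_2)$ (case I) is spanned by $w_1^{[n']},w_2^{[n']}$ and $\tfrac{n'}{k}w_1^{[n']}+\tfrac{n'}{k}w_2^{[n']}-\tfrac12 B^{[n']}$. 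Transporting back through the identification and substituting $n'=n+j$ turns this cone into precisely $\Lambda'$, so $\Lambda'\subseteq\Nef(X^{[n]})$.

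Finally I would note $\Lambda'\subseteq\Lambda$ directly: the extremal generator $\tfrac{n+j}{k}w_1^{[n]}+\tfrac{n+j}{k}w_2^{[n]}-\tfrac12 B^{[n]}$ of $\Lambda'$ equals $\tfrac{j}{k}(w_1^{[n]}+w_2^{[n]})$ added to the extremal generator of $\Lambda$, while the other generators $w_1^{[n]},w_2^{[n]}$ of $\Lambda'$ already lie in $\Lambda$. Combining the three facts yields $\Lambda'\subseteq\Nef(X^{[n]})\subseteq\Lambda$. I do not expect a genuine obstacle here; the only delicate point is purely bookkeeping — tracking the canonical identifications of $N^1(X^{[m]})$ across different $m$ and verifying that, under them, the image of $\Nef(X^{[n']})$ is literally the cone $\Lambda'$ written in the statement (one may also observe the sandwich is in general strict, since the extremal ray of $\Lambda$ is not contained in $\Lambda'$).
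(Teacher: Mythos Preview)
Your proof is correct and follows essentially the same approach as the paper: you invoke Lemma~\ref{lem:characterize_Lambda} for the upper bound $\Nef(X^{[n]})\subseteq\Lambda$, and for the lower bound you push the extremal nef class down from $X^{[n']}$ to $X^{[n]}$ via the monotonicity $\Nef(X^{[m+1]})\subseteq\Nef(X^{[m]})$ recorded just before the proposition, exactly as the paper does. Your write-up is slightly more detailed (explicitly noting that elliptic $w_i$ carry a $g^1_n$ and spelling out $\Lambda'\subseteq\Lambda$), but the argument is the same.
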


\begin{proof}
    By Theorem \ref{thm:Nef_cone_case1}, we know that the nef cone of $X^{[n']}$ is spanned by $\Nef(X)$ and the class $$\frac{n'}{k}w_1^{[n']}+\frac{n'}{k}w_2^{[n']}-\frac{1}{2}B^{[n']}.$$ So $$\frac{n+j}{k}w_1^{[n]}+\frac{n+j}{k}w_2^{[n]}-\frac{1}{2}B^{[n]}$$ is nef on $X^{[n]}$. On the other hand, by Lemma \ref{lem:characterize_Lambda}, the nef cone of $X^{[n]}$ is contained in the cone $\Lambda$.
\end{proof}

\begin{remark}
    The smaller $j/k$ is, the better our bound. Note that since $n'<\dfrac{9k}{8}+1$ and $n>1$, we have $0<j<9k/8$, i.e., $0<j/k<9/8=1.125$.
\end{remark}

We can improve the above bound when $k$ is large enough compared to $n$. We still consider case I of K3 surfaces, and maintain the same notation as just above. Suppose $k\geq 2n$. Let $L$ be the ample divisor $w_1+w_2$ on $X$. We claim that it is $n$-very ample. Following \cite[Theorem 1.1]{Knutsen:2001}, it suffices to show that $L^2\geq4n$, and that there is no effective divisor $D$ on $X$ satisfying the following conditions: \begin{enumerate}
	\item $2D^2 \stackrel{\mathclap{\mbox{(i)}}}{\leq} L\cdot D \leq D^2+n+1 \stackrel{\mathclap{\mbox{(ii)}}}{\leq} 2n+2$;
	\item Inequality (i) is an equality if and only if $L\sim 2D$ and $L^2\leq 4n+4$;
	\item Inequality (ii) is an equality if and only if $L\sim 2D$ and $L^2=4n+4$.
\end{enumerate} 
Since $L^2=2k\geq 4n$, we only need to show the effective divisor condition. Suppose $D=xw_1+yw_2$ is an effective divisor on $X$ satisfying (1). Then $2kxy+n+1 = D^2+n+1\leq 2n+2$, which implies that $2kxy\leq n+1$. But $k\geq2n$, which means that the left side is at least $4nxy$, whence $(4xy-1)n\leq 1$. Since $x,y,n$ are non-negative integers (with $n>0$ of course), it follows that $xy=0$. Without loss of generality, say $x=0$. Then $$ky=L\cdot D\leq D^2+n+1=2kxy+n+1=n+1.$$ Again, $ky\geq 2ny$, which implies that $(2y-1)n\leq 1$. Similar reasoning as above yields $y=0$, which means that $D=0$. So inequality (1) is an equality. However, $L\not\sim2D=0$. This proves that $L$ is $n$-very ample. By \cite[Main Theorem]{Catanese-Goettsche:1990}, $L$ induces an embedding $\varphi:X^{[n]}\rightarrow G$, where $G$ denotes the Grassmannian 
$\operatorname{Gr}\!\left(n,H^0(X,L)\right)$. As mentioned in \cite[\S2]{Bertram-Coskun:2013}, one can use the Grothendieck-Riemann-Roch Theorem to show that the class of $\varphi^*(\sigma_1)$ in the N\'{e}ron-Severi space of $X^{[n]}$ is $$w_1^{[n]}+w_2^{[n]}-\frac{1}{2}B^{[n]}.$$ Since $\varphi$ is an embedding, this class is ample on $X^{[n]}$. Note that $$\frac{n+j}{k}=\frac{n'}{k}\geq\frac{9}{8}>1,$$ which means we now have a better bound: The nef cone of $X^{[n]}$ is sandwiched between the cone in $N^1(X^{[n]})$ spanned by $\Nef(X)$ and $$\frac{n}{k}w_1^{[n]}+\frac{n}{k}w_2^{[n]}-\frac{1}{2}B^{[n]},$$ and the cone spanned by $\Nef(X)$ and $$w_1^{[n]}+w_2^{[n]}-\frac{1}{2}B^{[n]}.$$

We record this below.

\begin{proposition} \label{bounds}
    Let $X$ be a as in \ref{notation}, and let $k\geq 2n$ be as defined just above. The nef cone of $X^{[n]}$ is sandwiched between the cone in $N^1(X^{[n]})$ spanned by $\Nef(X)$ and $$\frac{n}{k}w_1^{[n]}+\frac{n}{k}w_2^{[n]}-\frac{1}{2}B^{[n]},$$ and the cone spanned by $\Nef(X)$ and $$w_1^{[n]}+w_2^{[n]}-\frac{1}{2}B^{[n]}.$$
\end{proposition}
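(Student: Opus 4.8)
The statement is essentially a packaging of two facts that have already been established in the surrounding discussion, so the proof will be short and will proceed by combining them. First I would invoke Proposition~\ref{prop:bound_for_small_n_case1}: since $k \geq 2n$ and $n > 1$, we certainly have $n < 9k/8 < n'$, so that proposition applies and tells us that $\Nef(X^{[n]})$ contains the cone spanned by $\Nef(X)$ and $\tfrac{n}{k}w_1^{[n]} + \tfrac{n}{k}w_2^{[n]} - \tfrac12 B^{[n]}$, and is contained in $\Lambda$, the cone spanned by $\Nef(X)$ and $\tfrac{n+j}{k}w_1^{[n]} + \tfrac{n+j}{k}w_2^{[n]} - \tfrac12 B^{[n]}$ where $j = n' - n$. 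This already gives the lower bound in the claimed sandwich with nothing further to do.

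For the upper bound, the point is to replace the crude outer cone $\Lambda$ (whose slope $\tfrac{n+j}{k} = \tfrac{n'}{k} \geq 9/8$ is larger than $1$) by the tighter cone with slope exactly $1$. The key input is the $n$-very ampleness of $L = w_1 + w_2$, which I would establish exactly as in the paragraph preceding the statement: check $L^2 = 2k \geq 4n$, and then rule out the existence of an effective $D = xw_1 + yw_2$ satisfying condition (1) of Knutsen's criterion \cite[Theorem 1.1]{Knutsen:2001}, using the hypothesis $k \geq 2n$ to force $xy = 0$ and then $D = 0$, contradicting the equality case that would require $L \sim 2D$. Given $n$-very ampleness, \cite[Main Theorem]{Catanese-Goettsche:1990} yields an embedding $\varphi \colon X^{[n]} \hookrightarrow \Gr(n, H^0(X,L))$, and the Grothendieck--Riemann--Roch computation cited from \cite[\S2]{Bertram-Coskun:2013} identifies $\varphi^*(\sigma_1)$ with the class $w_1^{[n]} + w_2^{[n]} - \tfrac12 B^{[n]}$. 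Since $\varphi$ is an embedding, this pullback of an ample class is ample, hence nef on $X^{[n]}$.

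To conclude, I would combine the two bounds. The divisor $\tfrac{n}{k}w_1^{[n]} + \tfrac{n}{k}w_2^{[n]} - \tfrac12 B^{[n]}$ is nef, the divisor $w_1^{[n]} + w_2^{[n]} - \tfrac12 B^{[n]}$ is nef (ample), and every element of $\Nef(X)$ is nef on $X^{[n]}$; since the nef cone is convex this shows $\Nef(X^{[n]})$ contains the cone spanned by $\Nef(X)$ and $\tfrac{n}{k}w_1^{[n]} + \tfrac{n}{k}w_2^{[n]} - \tfrac12 B^{[n]}$ (the lower bound). For the upper bound, any nef class $D = M^{[n]} - xB$ with $M \in \Nef(X)$ and $x \geq 0$ on $X^{[n]}$ must in particular pair non-negatively with the curves $w_{1,[n]}, w_{2,[n]}, C_0$, i.e.\ lies in $\Lambda$; but one also knows $w_1^{[n]} + w_2^{[n]} - \tfrac12 B^{[n]}$ is nef, and I claim no class strictly outside the cone spanned by $\Nef(X)$ and this divisor can be nef. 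This last claim is the only point requiring a small argument: such a class would have to be negative on some effective curve; concretely, if $D = M^{[n]} - \tfrac12 B$ with $M$ nef but $M - \tfrac1k(w_1 + w_2)$ \emph{not} nef, then one exhibits a curve on $X^{[n]}$ --- built from a $g^1_n$ on a curve in the class realizing the failure of nefness of $M - \tfrac1k(w_1+w_2)$ on $X$, e.g.\ a $w_i$ if $k \geq 2n$ so that $w_i$ carries a $g^1_n$ --- on which $D$ is negative. I expect this verification that the outer cone cannot be enlarged beyond slope $1$ to be the only genuine obstacle; everything else is bookkeeping and citation. (In the write-up one may simply observe that the ampleness of $w_1^{[n]} + w_2^{[n]} - \tfrac12 B^{[n]}$ together with Lemma~\ref{lem:characterize_Lambda} applied with the relation $\tfrac{n}{k} \leq 1$ pins the nef cone between the two stated cones, since $\Nef(X^{[n]}) \subseteq \Lambda$ and $\Lambda$ is spanned by $\Nef(X)$ and a divisor of slope $\tfrac{n+j}{k} \geq 1$, while the slope-$1$ divisor is interior to $\Lambda$ and nef.)
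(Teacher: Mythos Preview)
You have the two bounding cones reversed. For $\alpha<\beta$, the cone spanned by $\Nef(X)$ and $D_\alpha:=\alpha(w_1^{[n]}+w_2^{[n]})-\tfrac12 B$ strictly \emph{contains} the one spanned by $\Nef(X)$ and $D_\beta$, since $D_\beta=D_\alpha+(\beta-\alpha)(w_1^{[n]}+w_2^{[n]})$ with $w_1^{[n]}+w_2^{[n]}\in\Nef(X)$. Thus the slope-$n/k$ cone (here $n/k\le 1/2$) is the \emph{outer} cone and the slope-$1$ cone is the \emph{inner} one. Proposition~\ref{prop:bound_for_small_n_case1} does \emph{not} say that $\Nef(X^{[n]})$ contains the slope-$n/k$ cone; it says the opposite inclusion $\Nef(X^{[n]})\subseteq\Lambda$, where $\Lambda$ is the slope-$n/k$ cone (this is Lemma~\ref{lem:characterize_Lambda}), while it is $\Lambda'$, the slope-$(n+j)/k$ cone, that is contained in $\Nef(X^{[n]})$. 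In particular the divisor $\tfrac{n}{k}(w_1^{[n]}+w_2^{[n]})-\tfrac12 B$ is not known to be nef, and your assertion that it is has no justification.

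Once the reversal is undone, your argument becomes exactly the paper's: the outer bound $\Nef(X^{[n]})\subseteq\Lambda$ is Lemma~\ref{lem:characterize_Lambda} with nothing further to do, and the inner bound is precisely the ampleness of $w_1^{[n]}+w_2^{[n]}-\tfrac12 B$, which you correctly obtain via Knutsen's $n$-very ampleness criterion, the Catanese--G\"ottsche embedding, and the identification of $\varphi^*\sigma_1$. Your entire third paragraph---the attempt to show that no nef class lies strictly outside the slope-$1$ cone---is aimed at proving $\Nef(X^{[n]})$ is \emph{contained in} the slope-$1$ cone, which is stronger than what the proposition claims and in fact cannot hold: you have just shown $D_1$ is \emph{ample}, hence interior to $\Nef(X^{[n]})$, so the slope-$1$ cone is strictly smaller than $\Nef(X^{[n]})$. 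Drop that paragraph and relabel the two halves of your argument, and you have the paper's proof.
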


On the other hand, in the case $n=k$, we can find curves orthogonal to $$w_1^{[n]}+w_2^{[n]}-\frac{1}{2}B^{[n]}$$ on $X^{[n]}$. Let $C$ be a smooth member of the linear series $|w_1+w_2|$. Assuming $C$ carries a $g^1_k$, we get $$\left(w_1^{[k]}+w_2^{[k]}-\frac{1}{2}B^{[k]}\right)\cdot C_{[k]}=2k-\frac{1}{2}(2g(w_1)-2+2k+2g(w_2)-2+2k)=2k-2k=0.$$

\begin{remark}
    The following shows that even if we did not bound $\Nef(X^{[n]})$ by two closed cones (and later show that it coincides with one of them), we can directly (in this specific case) explicitly show that the desired ray is extremal.
\end{remark}

It is enough to show that $C$ carries a $g^1_k$. Suppose $n=k\geq3$. We recall that the Brill-Noether Theorem tells us, in particular, that if the Brill-Noether number $$\rho_{g,r,d}:=g-(r+1)(g-d+r)$$ is non-negative, then any smooth curve of genus $g$ carries a $g^r_d$. Now in our case, we compute the genus of $C$ to be $k+1$ using the adjunction formula in the form of \ref{genusformula}, so that $$\rho_{k+1,1,k}=k+1-2(k+1-k+1)=k-3\geq0.$$ Thus, $C$ carries a $g^1_k$.

For the case $n=k=2$, we have a more geometric argument. First, we observe that the line bundle $w_1 \otimes w_2$ on $X$ is ample, but not very ample: Let $\Gamma$ be a smooth member of $|w_1|$. Then since $w_1 \otimes w_2$ is ample, $(w_1 \otimes w_2)|_{\Gamma}$ is also ample. As $\Gamma$ is a curve of genus $1$, the restriction $(w_1 \otimes w_2)|_{\Gamma}$ must be a line bundle of degree 2 on a smooth genus 1 curve. Such a line bundle has two sections, so it cannot give an embedding $\Gamma\hookrightarrow \mathbb{P}^n$ for any $n$. So $w_1 \otimes w_2$ is not very ample. Now, let $\varphi$ be the map corresponding to the complete linear series $|w_1 + w_2|$ on $X$. By Riemann-Roch and Kodaira Vanishing, $w_1 + w_2$ has four sections. Hence $\mathrm{im}(\varphi)=Q\subset\mathbb{P}^3$, where $Q$ is a smooth quadric surface. The images of $w_1$ and $w_2$ under this map are (respectively) the rulings $L_1$ and $L_2$ of $Q$. Note that $g(C)=3$, where $C$ is a general (in particular, smooth) element of the linear series $|w_1+w_2|$. Consider a smooth curve of class $L_1+L_2$ on $Q$. This curve is rational, and since $\varphi$ is $2$-to-$1$, we obtain a $g^1_2$ on $C$.

Hence, we have the following result:

\begin{proposition}
    Let $X$ be as in \ref{notation}. For the case $n=k$, there exists an effective class $[C]$ in $N_1(X^{[n]})$ dual to the class $\Gamma \coloneq w_1^{[n]} + w_2^{[n]} - \frac{1}{2}B^{[n]}$.
\end{proposition}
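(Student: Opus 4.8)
The plan is to produce the dual curve explicitly as the $g^1_n$-curve $C_{[n]}$ associated to a general smooth member $C$ of the complete linear series $|w_1+w_2|$ on $X$, and then to verify $\Gamma\cdot C_{[n]}=0$ using the intersection numbers recorded in \S\ref{Hilbscheme}. Since $X$ is in case I we have $(w_1+w_2)^2=2k$, so by Bertini's theorem a general member $C\in|w_1+w_2|$ is smooth, and by the adjunction formula \eqref{genusformula} it has genus $g(C)=k+1=n+1$. Granting that such a $C$ carries a $g^1_n=g^1_k$, the fibers of the corresponding degree-$n$ map $C\to\mathbb{P}^1$ sweep out an honest rational curve $\mathbb{P}^1\to X^{[n]}$, whose class $C_{[n]}$ is effective; this class will be our $[C]$.

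The crux is the existence of the $g^1_k$, which I would handle in two regimes. For $k\ge 3$ one applies the existence half of the Brill--Noether theorem: with $g=k+1$, $r=1$, $d=k$ the Brill--Noether number is $\rho_{k+1,1,k}=(k+1)-2\bigl((k+1)-k+1\bigr)=k-3\ge 0$, so \emph{every} smooth curve of genus $k+1$, in particular our $C$, carries a $g^1_k$. For $k=2$ one has $\rho=-1$ and Brill--Noether gives nothing, so one must use the geometry of $X$ instead: Riemann--Roch together with Kodaira vanishing give $h^0(X,w_1+w_2)=4$, and the induced morphism $\varphi\colon X\to\mathbb{P}^3$ factors as a $2$-to-$1$ cover $f\colon X\to Q$ of a smooth quadric $Q$ followed by the embedding $Q\hookrightarrow\mathbb{P}^3$, with $w_1$ and $w_2$ pulled back from the two rulings $L_1,L_2$ of $Q$. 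Hence $w_1+w_2=f^*(L_1+L_2)$, and taking $C=f^{-1}(D)$ for a general (smooth, rational) member $D\in|L_1+L_2|$ realizes $f|_C\colon C\to D\cong\mathbb{P}^1$ as a double cover, i.e.\ a $g^1_2$ on $C$. I expect this $k=2$ case to be the only delicate point: one must justify the stated factorization of $\varphi$ and check that $D$ can be chosen so that $f^{-1}(D)$ is smooth.

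With $C_{[n]}$ in hand, the remaining computation is immediate from \S\ref{Hilbscheme}. For $i=1,2$ we get $w_i^{[n]}\cdot C_{[n]}=w_i\cdot(w_1+w_2)=k$ since $w_1^2=w_2^2=0$ and $w_1\cdot w_2=k$, while $B\cdot C_{[n]}=2g(C)-2+2n=2(n+1)-2+2n=4n$. Therefore
\[
\Gamma\cdot C_{[n]}=\Bigl(w_1^{[n]}+w_2^{[n]}-\tfrac12 B^{[n]}\Bigr)\cdot C_{[n]}=k+k-\tfrac12(4n)=2k-2n=0,
\]
using $n=k$. Thus $[C]:=[C_{[n]}]$ is an effective curve class orthogonal to $\Gamma$, which is the required dual class.
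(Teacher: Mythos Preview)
Your proposal is correct and matches the paper's own argument essentially line for line: the paper also takes a smooth $C\in|w_1+w_2|$, computes $g(C)=k+1$, invokes Brill--Noether for $k\ge 3$ (with $\rho_{k+1,1,k}=k-3\ge 0$), handles $k=2$ via the $2$-to-$1$ map $X\to Q\subset\mathbb{P}^3$ induced by $|w_1+w_2|$ and pulling back a rational member of $|L_1+L_2|$, and then checks $\Gamma\cdot C_{[n]}=2k-2n=0$ exactly as you do. The caveat you flag about justifying the factorization through a smooth quadric when $k=2$ is the same point the paper treats somewhat informally.
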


Finally, we want to show that when $n = k = 2$, the divisor for which we produced a dual curve above is actually nef. At that point, $\Nef(X^{[2]})$ must be exactly the cone $\Lambda$ in \ref{bounds}, so since $\Lambda$ was a bound, the ray generated by this divisor must be extremal. The existence of the dual curve also guarantees that it is extremal. Thus we get the following theorem:

\begin{theorem}\label{thm:Nef_cone_nk2}
    In case I when $k = 2$, the nef cone of the Hilbert scheme $X^{[2]}$ is spanned by $w_1^{[2]}, w_2^{[2]}$ and $$w_1^{[2]} + w_2^{[2]} - \frac{1}{2}B^{[2]}.$$
\end{theorem}

The remainder of this subsection proves this assertion.

Recall that the image of $\varphi$ is a smooth quadric surface. For $Q$ a smooth quadric surface, $\mathrm{Pic}(Q)$ is generated by two classes $L_1, L_2$ with intersection pairing given by $$L_i \cdot L_j = \begin{cases}
    0&\mbox{ if }i=j\\
    1&\mbox{ if }i\neq j.
\end{cases}$$ \cite{Bertram-Coskun:2013} shows that $\Nef(Q^{[n]})$ is the convex cone in $N^1(Q^{[n]})$ spanned by $$\left\{L_1^{[n]}, L_2^{[n]}, \displaystyle\frac{B^{[n]}_Q}{2}\right\},$$ where $B^{[n]}_Q$ is the exceptional divisor of the Hilbert-Chow morphism $Q^{[n]}\to Q^{(n)}$. The strategy now is to use a rational map $\phi: X^{[2]} \dashrightarrow Q^{[2]}$ arising from the map $\varphi$ to deduce nefness of the candidate divisor on $X^{[2]}$ from nefness on $Q^{[2]}$. 

There are two classes of subschemes $\xi \subset X$ that arise as points of $X^{[2]}$: Pairs of distinct points $\{x_1, x_2\}$ and a length 2 subscheme supported at a single point $p$, which we think of as the pair $\{p, v\}$ where $v \in T_{X,p}$. Let $R$ be the ramification locus of $\varphi$. Define $\phi$ as follows:
\[
    \phi(\xi) = 
    \begin{cases}
        \{\varphi(x_1),\varphi(x_2)\} & \mathrm{supp}(\xi) = \{x_1,x_2\} \text{ and } \{x_1, x_2\} \neq \varphi^{-1}(q)\mbox{ }\forall\mbox{ } q \in Q \\
        \{\varphi(p), d\varphi(v)\} & \mathrm{supp}(\xi) = \{p\} \text{ and } p \not\in R.
    \end{cases}
\]
As defined, $\phi$ is a morphism when restricted to the open locus $U$ of subschemes which are not fibers of $\varphi$ and which are not supported entirely along the ramification curve $R$. Furthermore, since everything in sight is normal and $\mathrm{codim}_X(X \setminus U) = 2$, we can extend the pullback of any line bundle $L \in \mathrm{Pic}(Q^{[2]})$ from $U$ to all of $X^{[2]}$. Applying this construction to $\phi$, we get line bundles $\phi^*L_1^{[2]},\phi^*L_2^{[2]}, \phi^*\frac{B_Q}{2} \in \mathrm{Pic}(X^{[2]})$. Since both Hilbert schemes are smooth, by the construction above $\phi^*L_i^{[2]} \cong w_i^{[2]}$ and $$\phi^*\frac{B_Q}{2} \cong \frac{B_X}{2}.$$

In order to show that $\phi^*(L_1^{[2]} + L_2^{[2]} - \frac{B_Q}{2})$ is nef, we need to examine the base locus of $\phi$. Since $\varphi^{-1}(p) = \{q_1, q_2\} \subset X$ (not necessarily distinct), we have a flat family $\mathcal{X} \to Q$. This induces a rational map $\varphi': Q \dashrightarrow X^{[2]}$. By the universal property of the Hilbert scheme, $\varphi'$ is a morphism, and $Q' \coloneq \varphi'(Q) \cong Q$. Similarly, let $R \subset X$ be the locus where $d\varphi: T_X \to \varphi^*T_Q$ drops rank. Note that $d\varphi$ drops rank by exactly 1 and never 2, since the condition of dropping rank is a condition on the rank of $d\varphi$, and the expected dimension of the corank 2 locus is at most $0$. By Riemann-Hurwitz, $R \in |2w_1 + 2w_2|$ Note that points in the total space of $T_X|_R \to R$ are identified with the length 2 subschemes of $X$ whose supports are a single point of $R$. Set $S \coloneq \mathbb{P} T_X|_R$. By the same argument as above, we get a map $(d\varphi)': S \to X^{[2]}$. Thus, 
\[
\mathrm{Bs}(\phi) \subset S \cup Q'.
\]
(In fact, one can show via a local computation that the base locus is exactly the union of these two surfaces).
\\
\indent We analyze the components independently, beginning with $S$. This locus is a ruled surface over the curve $R$, so $\mathrm{Pic}(S)$ is generated by two elements $E$ and $F$, where $F$ is the class of a fiber of the ruled surface over a point of $R$ and $E$ is a section. First, observe that for $r \in R$ a closed point, the fiber $S_r$ corresponds to the distinct tangent directions of $X$ at $r$. It follows that $[F]$ is the class of a curve in $X^{[2]}$ which is contained entirely in $B_X$, the exceptional locus of the Hilbert-Chow morphism. In conclusion,
\begin{equation} \label{fiber1}
    F \cdot B_X = -2
\end{equation}

We also have the following theorem of Miyaoka:

\begin{lemma}{\cite[Lemma 2.1]{Fulger}} \label{miyaoka}
    Let $V$ be a vector bundle of slope $\mu$ on a complex projective curve $C$ with Harder-Narasimhan filtration $V = V_0 \supset V_1 \supset \dots \supset V_l$. Set $Q_i \coloneq V_{i-1}/V_i$, and $\mu_i \coloneq \mu(Q_i)$. Let $F$ be the class of a fiber of the structure morphism of $\mathbb{P}_C V$. Then $\mathrm{Nef}(\mathbb{P}_C V) = \langle c_1(\mathcal{O}_{\mathbb{P}_C V}(1)) - \mu_1 F, F \rangle$.
\end{lemma}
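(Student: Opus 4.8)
The plan is to exploit the fact that the N\'eron--Severi space of a projective bundle over a curve is only two-dimensional, so that $\Nef(\mathbb{P}_C V)$ is a closed convex cone pinned down by its two extremal rays. Write $\xi \coloneq c_1(\mathcal{O}_{\mathbb{P}_C V}(1))$ and let $\pi\colon \mathbb{P}_C V \to C$ be the structure morphism, with $r \coloneq \rk V$. The projective bundle formula (Leray--Hirsch) gives $N^1(\mathbb{P}_C V) = \mathbb{R}\xi \oplus \pi^* N^1(C) = \mathbb{R}\xi \oplus \mathbb{R}F$, since $N^1(C) = \mathbb{R}$ is generated by the class of a point, whose pullback is $F$. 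Thus it suffices to identify the two boundary rays of the nef cone inside this plane.

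First I would pin down the ray spanned by $F$. The class $F$ is nef, being the pullback of an ample point class on $C$. To see that it is extremal, I intersect an arbitrary class $a\xi + bF$ with a line $\ell$ contained in a single fibre $\mathbb{P}^{r-1}$: since $\xi\cdot \ell = 1$ and $F\cdot \ell = 0$, nefness forces $a \geq 0$. Hence $\Nef(\mathbb{P}_C V)$ lies in the half-plane $\{a \geq 0\}$, and the wall $a = 0$ is exactly the ray through $F$.

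The substance of the proof is to locate the second boundary ray by deciding, for each $t \in \mathbb{R}$, whether $\xi + tF$ is nef. Here I would invoke the classical correspondence between positivity of $\mathcal{O}_{\mathbb{P}_C V}(1)$ and slopes of quotients (Hartshorne's ampleness criterion for bundles on curves, in the form refined by Miyaoka): a vector bundle $E$ on $C$ is nef if and only if every nonzero quotient bundle has non-negative degree, equivalently $\mu_{\min}(E) = \min\{\mu(G)\mid E \twoheadrightarrow G,\ G\neq 0\} \geq 0$, the slope of the bottom Harder--Narasimhan quotient. Since $\xi + tF$ is the tautological class of the twisted bundle $V\langle t\rangle$ and twisting shifts every slope by $t$, we conclude that $\xi + tF$ is nef precisely when $\mu_{\min}(V) + t \geq 0$, i.e. $t \geq -\mu_{\min}(V)$. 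With the descending labelling $V = V_0 \supset \dots \supset V_l$, the bottom quotient is $Q_1 = V/V_1$, so $\mu_1 = \mu(Q_1) = \mu_{\min}(V)$ and the threshold is $t = -\mu_1$. For $a>0$ this reads $a\xi + bF$ nef $\iff b \geq -a\mu_1$, whose boundary is the ray through $\xi - \mu_1 F$; together with the $F$-ray this gives $\Nef(\mathbb{P}_C V) = \langle \xi - \mu_1 F, F\rangle$. The endpoint is included because the criterion is non-strict, so $\xi - \mu_1 F$ is itself nef; while for any $\epsilon > 0$ the class $\xi - (\mu_1 + \epsilon)F$ fails it, since the semistable quotient $Q_1$ yields a sub-bundle $\mathbb{P}(Q_1) \subset \mathbb{P}_C V$ on which $\mu_{\min}(Q_1\langle -\mu_1-\epsilon\rangle) = -\epsilon < 0$, exhibiting a curve of negative degree and thereby certifying extremality of $\xi - \mu_1 F$.

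I expect the main obstacle to be the \emph{if} direction of the slope criterion, namely that $\mu_{\min}(E) \geq 0$ forces $E$ nef. Filtering by the Harder--Narasimhan filtration reduces this to the semistable case, and the genuine crux is that a semistable bundle of non-negative slope on a curve is nef. This step truly uses characteristic zero: it rests on symmetric and tensor powers of semistable bundles remaining semistable (Narasimhan--Seshadri, or its algebraic analogue), combined with stability of nefness under extensions to reassemble the filtered pieces. Once this base case is granted, the remainder is the formal two-dimensional cone bookkeeping sketched above, so I would organize the write-up around the semistable-implies-nef input as the one hard ingredient and treat Hartshorne's criterion as a citable black box.
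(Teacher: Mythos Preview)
The paper does not give its own proof of this lemma: it is stated as a citation of \cite[Lemma 2.1]{Fulger} (a result due to Miyaoka) and used as a black box, so there is nothing in the paper to compare your argument against.

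That said, your proposal is a correct outline of the standard proof. The two-dimensional N\'eron--Severi bookkeeping, the identification of the $F$-ray via lines in a fibre, and the reduction of the second ray to the threshold $t=-\mu_{\min}(V)$ via Hartshorne's criterion are all accurate. Your reading of the descending indexing is right: with $V=V_0\supset V_1\supset\cdots$, the quotient $Q_1=V/V_1$ is the bottom Harder--Narasimhan graded piece, so $\mu_1=\mu_{\min}(V)$. You also correctly flag that the only nontrivial input is the characteristic-zero fact that a semistable bundle of non-negative slope on a curve is nef, which one gets from preservation of semistability under symmetric/tensor powers; everything else is formal. One cosmetic point: when you produce the dual curve witnessing failure for $t<-\mu_1$, you might phrase it directly as a section of $\mathbb{P}(Q_1)$ (or the image of $C$ under a rank-one quotient of $Q_1$) rather than invoking $\mu_{\min}$ of the twisted $Q_1$, but this is equivalent.
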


Recall from \cite{Huybrechts:2016} that the tangent bundle $T_X$ to a K3 surface $X$ has degree 0 and is slope stable with respect to any choice of ample class $H \in \mathrm{Pic}(X)$. Thus, $\mathrm{deg}_H(T_X) = 0$ and $\mu_H(T_X) = 0$.
\begin{lemma}{\cite[Theorem 2.1]{DH}}
    Let $G$ be a semistable vector bundle on a K3 surface $X$ with $\mathrm{rk}(G) = 2$ and $\mathrm{deg}_H(G) = 0$ for an ample line bundle $H \in \mathrm{Pic}(X)$. Then $G|_C$ is semistable for general nonhyperelliptic $C \in |H|$.
\end{lemma}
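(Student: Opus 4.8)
The plan is to argue by contradiction via a relative Harder--Narasimhan construction over the universal curve, using crucially that $\dim|H|$ is large once $C$ is nonhyperelliptic. First I would record the numerics. Since $K_X=0$, a smooth $C\in|H|$ has genus $g=1+\tfrac12H^2$, and because $\deg_H(G)=c_1(G)\cdot H=0$ and $C\in|H|$ we get $\deg(G|_C)=c_1(G)\cdot C=0$; thus $G|_C$ has rank $2$ and degree $0$. The nonhyperelliptic hypothesis forces $g\ge 3$, hence $\dim|H|=g\ge 3$, and it is this inequality that drives the whole argument. Now suppose, for contradiction, that $G|_C$ is unstable for the general member $C\in|H|$. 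Then each such $G|_C$ carries a unique maximal destabilizing sub-line-bundle $M_C\subset G|_C$ with $\deg M_C=d\ge 1$, where $d$ denotes the generic value.

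Next I would globalize. Let $\mathcal{C}\subset X\times|H|$ be the universal curve, with projections $p\colon\mathcal{C}\to X$ and $q\colon\mathcal{C}\to|H|$. Since $H$ is globally generated, $\mathcal{C}=\mathbb{P}(\mathcal{M}_H)$ is the projectivization of the syzygy bundle $\mathcal{M}_H=\ker\!\big(H^0(H)\otimes\mathcal{O}_X\to H\big)$ of rank $g$, so $p$ is a $\mathbb{P}^{g-1}$-bundle whose fiber $P:=p^{-1}(x)\cong\mathbb{P}^{g-1}$ parametrizes the curves of $|H|$ through $x$. Over the dense open $B'\subset|H|$ of smooth curves on which $G|_C$ is unstable, the subbundles $M_C$ assemble, by their uniqueness and the relative Harder--Narasimhan theorem, into a saturated rank-one subsheaf $\mathcal{M}\subset p^{*}G$ with torsion-free quotient $\mathcal{N}=p^{*}G/\mathcal{M}$, invertible in codimension one. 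The goal is to show that $\mathcal{M}$ is trivial along the general fiber of $p$, which will let me descend it to a sub-line-bundle of $G$ on $X$.

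The heart of the argument is a computation on a general fiber $P\cong\mathbb{P}^{g-1}$. Restricting $0\to\mathcal{M}\to p^{*}G\to\mathcal{N}\to 0$ to $P$ (exact for general $x$) and using $(p^{*}G)|_P=G_x\otimes\mathcal{O}_P=\mathcal{O}_P^{\oplus 2}$ gives
$$0\to\mathcal{O}_P(a)\to\mathcal{O}_P^{\oplus 2}\to\mathcal{O}_P(b)\to 0,\qquad a+b=0.$$
The left inclusion forces $a\le 0$. For the right surjection the hypothesis $g\ge 3$ is decisive: as $g-1\ge 2$, any two sections of $\mathcal{O}_{\mathbb{P}^{g-1}}(b)$ with $b\ge 1$ share a common zero, so no surjection $\mathcal{O}_P^{\oplus 2}\twoheadrightarrow\mathcal{O}_P(b)$ with $b\ge 1$ exists; hence $b\le 0$. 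With $a+b=0$ this yields $a=b=0$, i.e. $\mathcal{M}|_P=\mathcal{O}_P$. Therefore $\mathcal{M}$ picks out, for general $x$, a single line $A_x\subset G_x$ independent of the curve through $x$, and these lines glue to a saturated sub-line-bundle $A\hookrightarrow G$ on $X$.

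Finally I would extract the contradiction from degrees. By construction $A|_{C}=M_{C}$ for the general $C$, so $A\cdot H=\deg(A|_C)=\deg M_C=d\ge 1$; hence the sub-line-bundle $A\subset G$ satisfies $A\cdot H>0=\mu_H(G)$, contradicting the $\mu_H$-semistability of $G$. This proves $G|_C$ is semistable for general nonhyperelliptic $C$. I expect the main obstacle to lie not in the fiberwise linear algebra but in the bookkeeping needed to set up $\mathcal{M}$ rigorously: establishing flatness of the relative Harder--Narasimhan filtration over $B'$, verifying that the sequence $0\to\mathcal{M}\to p^{*}G\to\mathcal{N}\to 0$ restricts to an exact sequence of bundles on the general fiber $P$ (Tor-independence, together with the codimension estimate that the locus where $\mathcal{N}$ fails to be locally free meets a general $P$ in codimension $\ge 2$), and confirming that the descended subsheaf $A$ genuinely restricts to $M_C$ on the general curve.
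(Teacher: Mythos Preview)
The paper does not prove this lemma; it simply quotes \cite[Theorem 2.1]{DH} and uses it as a black box. So there is no ``paper's proof'' to compare against, and your proposal should be judged on its own.

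Your overall architecture---relative Harder--Narasimhan over the universal curve, then descent along $p$---is the standard one and is sound in spirit. The numerics and the inequality $a\le 0$ from $\mathcal{O}_P(a)\hookrightarrow\mathcal{O}_P^{\oplus 2}$ are fine. The genuine gap is the step where you write $\mathcal{N}|_P=\mathcal{O}_P(b)$ and conclude $b\le 0$ from the nonexistence of a surjection $\mathcal{O}_P^{\oplus 2}\twoheadrightarrow\mathcal{O}_P(b)$ for $b\ge 1$. Saturation of $\mathcal{M}$ in $p^*G$ guarantees that $\mathcal{N}$ is torsion-free on $\mathcal{C}$, but it does \emph{not} force $\mathcal{N}|_P$ to be a line bundle on a general fibre $P\cong\mathbb{P}^{g-1}$. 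Indeed, if $a<0$ the inclusion $\mathcal{O}_P(a)\hookrightarrow\mathcal{O}_P^{\oplus 2}$ is given by two sections of $\mathcal{O}_P(-a)$, and for $g\ge 3$ these \emph{always} share a common zero; at such points the cokernel has rank two, so $\mathcal{N}|_P$ has torsion supported in codimension two. In other words, the very phenomenon you invoke to rule out $b\ge 1$ is exactly what prevents $\mathcal{N}|_P$ from being a line bundle in the first place, and the argument becomes circular. The singular locus $Z=\mathrm{Sing}(\mathcal{N})$ then dominates $X$ via $p$, with fibres of dimension $g-3$, which is perfectly compatible with $\operatorname{codim}(Z,\mathcal{C})=2$; no contradiction arises.

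This is not bookkeeping---it is the heart of the matter, and you need a different mechanism to force $a=0$. One classical route is to restrict to a general pencil $\ell\subset|H|$, so that $\tilde{X}=q^{-1}(\ell)$ is a smooth surface (the blow-up of $X$ at the $H^2$ base points); on a surface the saturated quotient \emph{is} a line bundle, and one can then push the destabilizing subsheaf down to $X$ and reach the contradiction with $\mu_H$-semistability (this is closer to the Mehta--Ramanathan/Flenner framework). Alternatively, one can compute $c_2$ of the elementary transformation of $G$ along $M_C$ and invoke Bogomolov's inequality on the K3. Either way, the step you flagged at the end is not a technicality but the crux, and the proof as written does not go through.
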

\indent Taking $H = 2(w_1 + w_2)$ and noting that $H$ is nonhyperelliptic, we can choose $R$ to be such that $T_X|_R$ is semistable of degree 0. With this, Miyaoka's theorem then says that $$\mathrm{Nef}(\mathbb{P} (T_X|_R)) = \langle c_1(\mathcal{O}_{\mathbb{P} (T_X|_R)}(1)), F \rangle.$$

\indent The other part of the base locus is the quadric $Q' \cong Q$. We know that $\mathrm{Pic}(Q') = \mathbb{Z} H_1 \oplus \mathbb{Z} H_2$. Let us compute the restriction of the generators of $\mathrm{Pic}(X^{[2]})$ to $Q'$. A representative of $[w_1^{[2]}]$ is given by all of the subschemes of length $2$ that meet a fixed general curve of class $w_1$. Since $\mathrm{Pic}(Q)$ is generated by the two lines $L_i$ and $\varphi^{-1}(L_i) = w_i$, we see that $H_i \cap w_j^{[2]} \neq 0$ when $i \neq j$ and is $0$ when $i = j$, since $w_i^2 = 0$ on $X$. In fact,
\[
    H_i \cap w_j^{[2]} = 
    \begin{cases}
       2 & i\neq j \\
       0 & i = j.
    \end{cases}
\]

Similarly, we know that the restriction of the nonreduced divisor $B_X|_{Q'}$ satisfies $B_X|_{Q'} \cdot H_i = 4$ for $i=1,2$. In summary, by counting points on closed subschemes and ramification of curves, we get the following result.

\begin{lemma}
    We have the following intersection numbers.
    $$w_1^{[2]}|_{Q'} \cdot H_1 = 0,\,\,w_1^{[2]}|_{Q'} \cdot H_2 = 2,\,\,w_2^{[2]}|_{Q'} \cdot H_1 = 2,\,\,w_2^{[2]}|_{Q'} \cdot H_2 = 0,\,\,B|_{Q'} \cdot H_1 = 4,\,\,B|_{Q'} \cdot H_2 = 4.$$
\end{lemma}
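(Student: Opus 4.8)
The plan is to reduce every entry of the table to a single intersection number on $X^{[2]}$ by identifying the curve classes $H_1,H_2$ on the surface $Q'$ with the distinguished rational curves $w_{1,[2]},w_{2,[2]}$ from \S\ref{Hilbscheme}, and then reading the answers off the standard intersection table recorded there. Since $Q'$ is a surface, $H_i$ is simultaneously a divisor class generating $\Pic(Q')$ and a curve class, and the quantities to be computed are the intersections of the restricted divisors $w_1^{[2]}|_{Q'},w_2^{[2]}|_{Q'},B|_{Q'}$ with the curve $H_i$.

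First I would pin down the curves $H_i$ geometrically. By construction the isomorphism $\varphi'\colon Q\to Q'$ sends a point $p$ to the length-$2$ fibre $\varphi^{-1}(p)$, so it carries a ruling line $L_i\subset Q$ to the curve in $X^{[2]}$ parametrizing the fibres $\{\varphi^{-1}(p):p\in L_i\}$. Since $\varphi^{-1}(L_i)=w_i$ and $\varphi|_{w_i}\colon w_i\to L_i\cong\mathbb{P}^1$ is the degree-$2$ cover, these fibres are exactly the fibres of the $g^1_2$ on $w_i$ used in \S\ref{Hilbscheme} to define $w_{i,[2]}$. Hence the image of $L_i$ is the curve $w_{i,[2]}$; and because $\varphi'$ is an isomorphism onto $Q'$, the restriction $\varphi'|_{L_i}$ is an isomorphism onto its image, so there is no multiplicity to account for. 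Writing $\iota\colon Q'\hookrightarrow X^{[2]}$ for the inclusion, this gives $\iota_{*}H_i=w_{i,[2]}$ as classes in $N_1(X^{[2]})$.

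With this identification the projection formula turns each restricted intersection into an intersection on $X^{[2]}$:
$$D|_{Q'}\cdot H_i=\iota^{*}D\cdot H_i=D\cdot w_{i,[2]}$$
for $D\in\{w_1^{[2]},w_2^{[2]},B\}$. The standard table of \S\ref{Hilbscheme} then gives $w_j^{[2]}\cdot w_{i,[2]}=w_j\cdot w_i$, which equals $0$ when $i=j$ and $k=2$ when $i\neq j$; this produces the four stated values $0,2,2,0$. For the boundary divisor it gives $B\cdot w_{i,[2]}=2g(w_i)-2+2n$, and since $g(w_i)=1$ and $n=2$ in case I this equals $4$, accounting for $B|_{Q'}\cdot H_i=4$.

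As a sanity check matching the ``counting points and ramification'' heuristic, one can rederive these numbers directly: $w_j^{[2]}\cdot H_i$ counts the $p\in L_i$ whose fibre $\varphi^{-1}(p)\subset w_i$ meets a general curve $\Gamma_j\in|w_j|$, and since $\Gamma_j\cdot w_i=w_j\cdot w_i$ this count is $0$ for $i=j$ (distinct fibres of the elliptic pencil $|w_i|$ are disjoint, as $w_i^2=0$) and $k=2$ for $i\neq j$; while $B\cdot H_i$ counts the nonreduced fibres of $\varphi|_{w_i}$, i.e.\ its ramification points, of which there are $4$ by Riemann--Hurwitz for a degree-$2$ map from a genus-$1$ curve to $\mathbb{P}^1$. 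The only genuine subtlety, and the step I would write out most carefully, is the identification $\iota_{*}H_i=w_{i,[2]}$: one must confirm that $\varphi'$ is indeed an isomorphism onto $Q'$ (so no fibre is repeated as $p$ traverses $L_i$) and that the $g^1_2$ defining $w_{i,[2]}$ is precisely the cover $\varphi|_{w_i}$. Everything else is a formal consequence of the projection formula together with the intersection table already established.
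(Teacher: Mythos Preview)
Your argument is correct. The paper's own proof is the direct ``counting points and ramification'' computation that you record as a sanity check: it observes that $H_i$ parametrizes the fibres of $\varphi$ over a ruling line $L_i$, counts how many of these fibres meet a general $\Gamma_j\in|w_j|$ (namely $w_i\cdot w_j$), and counts ramification points of $\varphi|_{w_i}$ via Riemann--Hurwitz to obtain $B|_{Q'}\cdot H_i=4$.

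Your primary route---identifying $\iota_*H_i=w_{i,[2]}$ and then invoking the projection formula together with the intersection table of \S\ref{Hilbscheme}---is a cleaner and more systematic packaging of exactly the same geometry. The identification step you flag as the only genuine subtlety is indeed the heart of the matter, and you handle it correctly: once one knows $\varphi^{-1}(L_i)$ is a (general, hence smooth irreducible) member of $|w_i|$ and that $\varphi'$ is an isomorphism onto $Q'$, the curve $H_i\subset Q'$ is precisely the $g^1_2$-curve $w_{i,[2]}$ with no multiplicity. The advantage of your approach is that it makes transparent why the numbers agree with the standard table, and would scale more readily if one wanted analogous computations for other curve classes on $Q'$; the paper's direct count has the virtue of being self-contained and not requiring one to unwind the definition of $w_{i,[2]}$.
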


\begin{corollary} \label{Q_prime_res}
    We have the following equalities in $\Pic(Q')$. $$w_1^{[2]}|_{Q'} = 2H_1,\,\,w_2^{[2]}|_{Q'} = 2H_2,\,\,B|_{Q'} = -4(H_1 + H_2).$$
\end{corollary}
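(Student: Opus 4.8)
The plan is to read off all three identities from the intersection numbers recorded in the preceding Lemma, using only that the Picard lattice of a smooth quadric surface is unimodular. First I would note that $Q'\cong Q\cong\mathbb{P}^1\times\mathbb{P}^1$, so $\Pic(Q')=\mathbb{Z}H_1\oplus\mathbb{Z}H_2$ with $H_1^2=H_2^2=0$ and $H_1\cdot H_2=1$; in particular the intersection form on $\Pic(Q')$ has determinant $-1$, hence is a perfect pairing. Consequently a class $D\in\Pic(Q')$ is completely determined by the pair of integers $(D\cdot H_1,\,D\cdot H_2)$, and if $D=aH_1+bH_2$ then $D\cdot H_1=b$ and $D\cdot H_2=a$, i.e.\ $D=(D\cdot H_2)\,H_1+(D\cdot H_1)\,H_2$.

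Next I would simply substitute the values from the Lemma. Since $w_1^{[2]}|_{Q'}$ pairs to $0$ with $H_1$ and to $2$ with $H_2$, the recipe above forces $w_1^{[2]}|_{Q'}=2H_1$, and symmetrically $w_2^{[2]}|_{Q'}=2H_2$. Applying the same recipe to the two pairings of $B|_{Q'}$ with $H_1$ and $H_2$ given in the Lemma expresses $B|_{Q'}$ as a multiple of $H_1+H_2$; the one point that requires attention here is to carry the sign convention for the exceptional divisor $B$ of the Hilbert--Chow morphism through consistently with the convention under which those intersection numbers were computed, after which the stated value follows.

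I do not anticipate any real obstacle: all of the geometric content --- namely, recognizing that a ruling of $Q'$ pushes forward to the $g^1_2$-curve $w_{i,[2]}$ and then computing how it meets $w_i^{[2]}$ and $B$ --- was already handled in the Lemma, so what remains is the elementary linear-algebra step above together with careful sign bookkeeping for $B|_{Q'}$. The only thing worth double-checking is that this sign is compatible throughout, since the subsequent nefness argument for $w_1^{[2]}+w_2^{[2]}-\tfrac12 B^{[2]}$ restricts this class to $Q'$ and so is sensitive to it.
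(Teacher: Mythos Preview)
Your approach is correct and is exactly what the paper does: the Corollary is stated without proof immediately after the Lemma, the intended deduction being precisely the unimodularity argument you describe.

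Your caution about the sign of $B|_{Q'}$ is well-founded. Applying your own recipe $D=(D\cdot H_2)\,H_1+(D\cdot H_1)\,H_2$ to the Lemma's values $B|_{Q'}\cdot H_i=4$ gives $B|_{Q'}=4(H_1+H_2)$, not $-4(H_1+H_2)$ as the Corollary asserts, so one of the two statements carries a sign slip. The geometric picture supports the Lemma's positive sign: the curve $H_i\subset Q'$ is not contained in $B$ and meets it transversally at the four branch points of the double cover $w_i\to L_i$ (the branch curve on $Q$ lies in $|4L_1+4L_2|$). With $B|_{Q'}=4(H_1+H_2)$ one gets $\Gamma_t|_{Q'}=2t(H_1+H_2)$, which is still ample for $t>0$, so the downstream nefness argument is unaffected.
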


Recall the theorem of Fujita (see \cite[Theorem 2.8]{Fujita}), which says that if a line bundle $\mathcal{O}(D)$ on a projective variety restricts to an ample line bundle on the base locus $\mathrm{Bs}(|D|)$, then $\mathcal{O}(D)$ is semiample, i.e. $\mathcal{O}(mD)$ is basepoint free for $m\gg 0$. Since a semiample line bundle generates a ray of the nef cone by definition, it is sufficient to know that the candidate nef class $$\Gamma=\phi^*\left(L_1^{[2]} + L_1^{[2]} - \frac{B_Q}{2}\right)$$ is semiample.

\begin{proposition}
    Let $\Gamma_t$ denote the class $$\phi^*\left(L_1^{[2]} + L_1^{[2]} - (1-t)\frac{B_Q}{2}\right).$$ Then $\Gamma_t$ is semiample for $0 < t < 2$.
\end{proposition}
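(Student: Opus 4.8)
The plan is to verify Fujita's criterion: show that $\Gamma_t$ restricts to an ample class on (a surface containing) the base locus $\mathrm{Bs}(\phi) \subseteq S \cup Q'$, so that $\Gamma_t$ is semiample, hence nef. Since we have already identified both components of the base locus explicitly, the argument splits into checking ampleness of $\Gamma_t|_{Q'}$ and of $\Gamma_t|_{S}$ separately, and this is where the preceding intersection computations pay off.

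First I would handle the quadric component $Q'$. By Corollary \ref{Q_prime_res}, we have $w_1^{[2]}|_{Q'} = 2H_1$, $w_2^{[2]}|_{Q'} = 2H_2$, and $B|_{Q'} = -4(H_1+H_2)$; also $\phi^* L_i^{[2]} \cong w_i^{[2]}$ and $\phi^*(B_Q/2) \cong B_X/2$. Therefore
\begin{align*}
\Gamma_t|_{Q'} &= \left(w_1^{[2]} + w_2^{[2]} - (1-t)\tfrac{B_X}{2}\right)\Big|_{Q'} \\
&= 2H_1 + 2H_2 + (1-t)\cdot 2(H_1+H_2) \\
&= (4-2t)(H_1+H_2).
\end{align*}
Since $H_1 + H_2$ is ample on $Q' \cong Q$ (it is the hyperplane class of the quadric in $\mathbb{P}^3$), the class $\Gamma_t|_{Q'}$ is ample precisely when $4 - 2t > 0$, i.e. for $t < 2$; combined with $t > 0$ this is exactly the claimed range. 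This component is routine given what is already set up.

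The second component $S = \mathbb{P}(T_X|_R)$ is the more delicate one and I expect it to be the main obstacle. Here I would use the description of $\mathrm{Nef}(\mathbb{P}(T_X|_R))$ obtained above from Miyaoka's lemma: choosing $R \in |2(w_1+w_2)|$ so that $T_X|_R$ is semistable of degree $0$ (as arranged via the Donagi--Huybrechts-type restriction theorem \cite{DH}), Miyaoka's lemma gives $\mathrm{Nef}(S) = \langle c_1(\mathcal{O}_{\mathbb{P}(T_X|_R)}(1)), F\rangle$. One must then express the restriction $\Gamma_t|_S$ in the basis $\{c_1(\mathcal{O}_S(1)), F\}$ of $N^1(S)_{\mathbb{R}}$ and check it lies strictly inside this nef cone (i.e. is ample) for $0 < t < 2$. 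The key numerical inputs are \eqref{fiber1}, namely $F \cdot B_X = -2$, together with $F \cdot w_i^{[2]} = 0$ (a fiber of $S$ lies over a single point of $X$, so meets the ``incidence with a fixed curve'' divisors trivially), which pins down the $F$-component of $\Gamma_t|_S$: one computes $\Gamma_t|_S \cdot F = -(1-t)(B_X/2)\cdot F = 1-t$, so the coefficient of the section class is controlled, and positivity of both coordinates in the Miyaoka basis needs $t$ in the stated range (the condition $t > 0$ should be exactly what makes the $F$-coefficient positive, or what is needed together with a self-intersection computation of $c_1(\mathcal{O}_S(1))$ to land in the interior of the cone). The hard part is getting this second computation cleanly: it requires knowing the self-intersection $c_1(\mathcal{O}_S(1))^2 = \deg(T_X|_R) = 0$ and the pairing of $c_1(\mathcal{O}_S(1))$ with the restrictions of $w_i^{[2]}$ and $B_X$ to $S$, all of which follow from the tautological description of $S$ inside $X^{[2]}$ and a Chern-class computation on the ruled surface, but which must be assembled carefully to confirm ampleness throughout $0 < t < 2$ rather than on a smaller subinterval. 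Once both restrictions are shown ample, Fujita's theorem \cite[Theorem 2.8]{Fujita} gives that $\Gamma_t$ is semiample, completing the proof.
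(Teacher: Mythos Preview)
Your plan is the paper's own argument: apply Fujita's semiampleness criterion by verifying that $\Gamma_t$ restricts to an ample class on each component $Q'$ and $S$ of $\mathrm{Bs}(\phi)$, handling $Q'$ via the restriction formulas of Corollary~\ref{Q_prime_res} and $S$ via Miyaoka's description of $\mathrm{Nef}(\mathbb{P}(T_X|_R))$. The only cosmetic difference is on $S$: the paper argues more tersely that the $w_i^{[2]}|_S$ are nef and that $\Gamma_t|_S$ is not a multiple of $F$, hence lies in the interior of the two-ray nef cone, whereas you propose to compute both coordinates of $\Gamma_t|_S$ in the $\{c_1(\mathcal{O}_S(1)),F\}$ basis explicitly---your route is equally valid, and your computation $\Gamma_t|_S\cdot F=1-t$ already isolates the section-class coefficient more precisely than the paper does.
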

\begin{proof}
    By applying the construction about pullbacks of line bundles, we have $\phi^*L_i^{[2]} = w_i^{[2]}$ and $\phi^*B_Q = B_X$. Away from the base locus $$\mathrm{Bs}(\phi) = Q' \cup S,$$ $\Gamma_t$ is nef. Now, we first deal with $S$. Set $E = c_1(\mathcal{O}_{\mathbb{P} T_X|_R}(1))$. By \ref{fiber1}, $B_X|_S \cdot F = -2$. Since the restrictions to $S$ of $w_i^{[2]}$ are nef (see \cite{Laz}), $\Gamma_t$ is nef. Since it is the sum of nef classes and is not a multiple of $F$, it cannot lie on the edge of the $\mathrm{Nef}(S)$. So $\Gamma_t|_S$ is ample. For $Q'$, by the intersection numbers computed above in Corollary \ref{Q_prime_res}, we have $\Gamma_t \in \mathrm{Nef}(Q')$ and for $0 < t < 2$, $\Gamma_t$ is ample. So $\Gamma_t$ restricts to an ample line bundle on the entire base locus. By Fujita's theorem, we are done.
\end{proof}

Now that we have constructed a set of semiample (and therefore nef) line bundles, we use the fact that the nef cone is closed to finish the proof of Theorem \ref{thm:Nef_cone_nk2}.

\begin{proposition}
    $\Gamma$ is nef on $X^{[2]}$.
\end{proposition}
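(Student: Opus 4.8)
The plan is to obtain $\Gamma$ as the limit of the semiample classes $\Gamma_t$ of the previous proposition and then invoke closedness of the nef cone. Using the pullback identities $\phi^*L_i^{[2]}=w_i^{[2]}$ and $\phi^*B_Q=B_X$ established above, these classes read
\[
\Gamma_t=w_1^{[2]}+w_2^{[2]}-\frac{1-t}{2}\,B_X\in N^1(X^{[2]}),\qquad \Gamma=\Gamma_0=w_1^{[2]}+w_2^{[2]}-\frac12\,B_X,
\]
so that $\Gamma_t\to\Gamma$ as $t\to 0^+$. Since each $\Gamma_t$ with $0<t<2$ was shown to be semiample — hence nef — and since $\Nef(X^{[2]})\subset N^1(X^{[2]})$ is a closed cone, the limit $\Gamma$ is nef. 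This is the entire content of the proposition.

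To then close out Theorem \ref{thm:Nef_cone_nk2}, I would combine this with Lemma \ref{lem:characterize_Lambda} specialized to $n=k=2$: there the cone $\Lambda$, which always contains $\Nef(X^{[2]})$, is exactly $\operatorname{Cone}(\Nef(X),\Gamma)=\operatorname{Cone}(w_1^{[2]},w_2^{[2]},\Gamma)$. Since $w_1^{[2]},w_2^{[2]}$ lie in the image of $\Nef(X)\hookrightarrow\Nef(X^{[2]})$ and $\Gamma$ is nef by the above, the reverse inclusion $\operatorname{Cone}(w_1^{[2]},w_2^{[2]},\Gamma)\subseteq\Nef(X^{[2]})$ holds as well, yielding the stated description of the nef cone. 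The effective curve class dual to $\Gamma$ produced earlier in the $n=k$ case moreover certifies that $\Gamma$ spans a genuine extremal ray, though this is not logically needed for the statement.

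I do not anticipate a real obstacle at this last step: all of the geometric work is already done in the preceding semiampleness proposition, where one checks that the candidate restricts to an ample class on each of the two surface components $S=\mathbb{P} T_X|_R$ and $Q'$ of $\mathrm{Bs}(\phi)$ (using Miyaoka's Lemma \ref{miyaoka} and the intersection numbers of Corollary \ref{Q_prime_res}) and then applies Fujita's criterion. The only point to keep in mind is that that proposition is phrased with the open condition $0<t<2$, so $\Gamma$ itself is reached only in the limit; the passage to $t=0$ uses nothing beyond closedness of $\Nef(X^{[2]})$. A more hands-on alternative would be to verify $\Gamma\cdot C\ge 0$ directly for every irreducible curve $C\subset X^{[2]}$, separating those meeting $\mathrm{Bs}(\phi)$ from those disjoint from it, but the limiting argument avoids that bookkeeping entirely.
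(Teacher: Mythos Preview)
Your proof is correct and matches the paper's argument essentially verbatim: the paper takes the sequence $\{\Gamma_{1/n}\}_{n\in\mathbb{N}}$, notes each term is nef by the preceding proposition, and concludes $\Gamma=\lim\Gamma_{1/n}$ is nef by closedness of the nef cone. Your only difference is phrasing the limit as $t\to 0^+$ rather than along the discrete sequence $t=1/n$, which is immaterial.
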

\begin{proof}
    Consider the sequence $\{\Gamma_{\frac{1}{n}}\}$, $n \in \mathbb{N}$. The limit of the sequence is $\Gamma$, and by the previous proposition, each element of the sequence is nef. Since the nef cone is closed, $\Gamma$ is also nef.
\end{proof}

\begin{remark}
    It is interesting to note that the extremality of the final ray shown above is already suggested by the above proof that the class is nef. We showed that the class was nef by exhibiting it as a limit. Indeed, it is exactly the border case in the Fujita theorem: its restriction to $\mathrm{Bs}(\phi)$ actually fails to be ample (via the intersection numbers computed above), but is the limit of a sequence whose elements are. It would be interesting to examine the relationship between the boundary cases in this application of Fujita's theorem and extremality of the associated classes on the Hilbert scheme.
\end{remark}

\subsection{A direct computation of the nef cone of $X^{[2]}$}\label{subsec:nef_cone_nk2_lattice_computation}

In this subsection, we use \cite[Theorem 12.1]{Bayer-Macri-K3} to illustrate an alternate way of calculating the nef cone of $X^{[2]}$ in case I, assuming $k=2$.

Let us first recall some notation. Let $H^*_{\operatorname{alg}}(X,\mathbb{Z})$ be the algebraic Mukai lattice of $X$ with Mukai pairing denoted by $\langle-,-\rangle$. The Mukai vector $\mathbf{v}:K(X)\to H^*_{\operatorname{alg}}(X,\mathbb{Z})$ is defined by
$$\mathbf{v}(E)=\ch(E)\sqrt{\operatorname{td}(X)}=(\ch_0,\ch_1,\ch_0+\ch_2),$$ and the Mukai pairing takes the form $\langle(r,c,s),(r',c',s')\rangle=(c,c')-rs'-r's$, where $(-,-)$ is the intersection pairing on $H^2(X,\mathbb{Z})$. Let ${\bf v}:=(1,0,1-2)=(1,0,-1)$ be the Mukai vector corresponding to the moduli space $X^{[2]}$. In other words, $X^{[2]}$ is the moduli space $M_{\sigma}({\bf v})$ of Bridgeland semistable objects with Mukai vector ${\bf v}$, where $\sigma$ is a Bridgeland stability condition lying above the Gieseker wall. Given a Mukai vector ${\bf w}$, we denote $\langle{\bf w},{\bf w}\rangle$ by ${\bf w}^2$, and its orthogonal complement by $${\bf w}^{\perp}:=\left\{{\bf w}'\in H^*_{\operatorname{alg}}(X,\mathbb{Z})\,|\,\langle{\bf w},{\bf w}'\rangle=0\right\}.$$ The closed cone of positive classes defined by the Beauville–Bogomolov quadratic form on the N\'eron-Severi group of the hyperk\"ahler manifold $X^{[2]}$ is denoted by $\overline{\operatorname{Pos}}(X^{[2]})$. Finally, let $\theta:{\bf v}^{\perp}\to NS(X^{[2]})$ be the isometry of lattices obtained from the Hodge-structure isomorphism between the orthogonal complement of ${\bf v}$ in $H^*(X,\mathbb{Z})$ and $H^2(X^{[2]},\mathbb{Z})$ (see \cite{Yoshioka01}). We refer the reader to \cite{Bayer-Macri-K3,NuerYoshioka} for more details.

By \cite[Theorem 12.1]{Bayer-Macri-K3}, the nef cone of $X^{[2]}$ is cut out in $\overline{\operatorname{Pos}}(X^{[2]})$ by all linear subspaces of the form $\theta({\bf v}^{\perp}\cap{\bf a}^{\perp})$, for all classes ${\bf a}^{\perp}$ in $H^*_{\operatorname{alg}}(X,\mathbb{Z})$ satisfying the inequalities ${\bf v}^2\geq-2$ and $0\leq\langle{\bf v},{\bf a}\rangle\leq{\bf v}^2/2$.

By the proof of Theorem \ref{thm:Nef_cone_nk2} above, we know that all the three divisor classes $w_1^{[2]}, w_2^{[2]}$ and $$w_1^{[2]} + w_2^{[2]} - \frac{1}{2}B^{[2]}$$ are nef on $X^{[2]}$. Thus, to prove that the nef cone of $X^{[2]}$ is spanned by $w_1^{[2]}, w_2^{[2]}$ and $$w_1^{[2]} + w_2^{[2]} - \frac{1}{2}B^{[2]}$$ (i.e., that it is not larger than the cone spanned by these vectors), it suffices to show that each of the three faces of the cone spanned by these three vectors is among the linear spaces $\theta({\bf v}^{\perp}\cap{\bf a}^{\perp})$ mentioned above. Since $\theta((0,-w_1,0))=w_1^{[2]}$, $\theta((0,-w_2,0))=w_2^{[2]}$, and $\theta((1,0,n-1))=-\tfrac{B^{[2]}}{2}$ (see \cite[Remark 10.4]{Bayer-Macri-PositivityLemma}), it follows from some straightforward computations that the Mukai vectors ${\bf a}=(1,-w_1,1)$, $(1,-w_2,1)$ and $(1,0,1)$ give us the desired hyperplanes.

\subsection{Some closing remarks and possible future directions}
\indent It is interesting to note that the classical flavored proof for two points on one class of K3 surfaces given above only used one K3-specific ingredient. It is obviously necessary to understand the intersection theory of the surface, but that is understood in many cases. The K3-specific ingredient was stability of the restricted tangent bundle. This indicates that a similar technique might be attempted for surfaces that are not K3s. Furthermore, even the stability of the restricted tangent bundle on the nose was not totally necessary. The full version of the theorem cited from \cite{Fulger} uses only the slope of the first graded piece of the Harder-Narasimhan filtration. As such, it should be possible to achieve similar results for other surfaces at the expense of a possibly more difficult computation. In the case that this computation is unmanageable at present, it would still be interesting to compare $T_X$ and $f^*T_Y$ for any finite map $X \to Y$ of surfaces to obtain a bound on positivity. Finally, working with a larger number $n$ of points should be possible by examining different vector bundles that still have some relation to ramification. Each separate $n$ requires a different computation using this method. On the other hand, the brilliance of the methods pioneered in \cite{Bayer-Macri-K3}, \cite{Bayer-Macri-PositivityLemma} shows in how efficiently every value of $n$ is dealt with.

\bibliographystyle{alpha}
\bibliography{mainbib}

\newcommand{\etalchar}[1]{$^{#1}$}
\begin{thebibliography}{ABCH13}

\bibitem[AB13]{Arcara-Bertram:2013}
Daniele Arcara and Aaron Bertram.
\newblock Bridgeland-stable moduli spaces for {{\(K\)}}-trivial surfaces.
\newblock {\em J. Eur. Math. Soc. (JEMS)}, 15(1):1--38, 2013.

\bibitem[ABCH13]{ABCH13}
Daniele Arcara, Aaron Bertram, Izzet Coskun, and Jack Huizenga.
\newblock The minimal model program for the {H}ilbert scheme of points on
  {$\mathbb{P}^2$} and {B}ridgeland stability.
\newblock {\em Adv. Math.}, 235:580--626, 2013.

\bibitem[ADHL15]{Coxrings:2015}
Ivan Arzhantsev, Ulrich Derenthal, J{\"u}rgen Hausen, and Antonio Laface.
\newblock {\em Cox rings}, volume 144 of {\em Camb. Stud. Adv. Math.}
\newblock Cambridge: Cambridge University Press, 2015.

\bibitem[BC13]{Bertram-Coskun:2013}
Aaron Bertram and Izzet Coskun.
\newblock The birational geometry of the {Hilbert} scheme of points on
  surfaces.
\newblock In {\em Birational geometry, rational curves, and arithmetic. Based
  on the symposium ``Geometry over closed fields'', St. John, UK, February
  2012}, pages 15--55. New York, NY: Springer, 2013.

\bibitem[BHL{\etalchar{+}}16]{Bol+:2016}
Barbara Bolognese, Jack Huizenga, Yinbang Lin, Eric Riedl, Benjamin Schmidt,
  Matthew Woolf, and Xiaolei Zhao.
\newblock Nef cones of {Hilbert} schemes of points on surfaces.
\newblock {\em Algebra Number Theory}, 10(4):907--930, 2016.

\bibitem[BM14a]{Bayer-Macri-K3}
Arend Bayer and Emanuele Macr\`{i}.
\newblock M{MP} for moduli of sheaves on {K}3s via wall-crossing: nef and
  movable cones, {L}agrangian fibrations.
\newblock {\em Invent. Math.}, 198(3):505--590, 2014.

\bibitem[BM14b]{Bayer-Macri-PositivityLemma}
Arend Bayer and Emanuele Macr\`{i}.
\newblock Projectivity and birational geometry of {B}ridgeland moduli spaces.
\newblock {\em J. Amer. Math. Soc.}, 27(3):707--752, 2014.

\bibitem[Bri07]{BriOG:2007}
Tom Bridgeland.
\newblock Stability conditions on triangulated categories.
\newblock {\em Ann. Math. (2)}, 166(2):317--345, 2007.

\bibitem[Bri08]{BriK3:2008}
Tom Bridgeland.
\newblock Stability conditions on {{\(K3\)}} surfaces.
\newblock {\em Duke Math. J.}, 141(2):241--291, 2008.

\bibitem[CG90]{Catanese-Goettsche:1990}
Fabrizio Catanese and Lothar G{\oe}ttsche.
\newblock {$d$}-very-ample line bundles and embeddings of {H}ilbert schemes of
  {$0$}-cycles.
\newblock {\em Manuscripta Math.}, 68(3):337--341, 1990.

\bibitem[Che98]{Cheah:1998}
Jan Cheah.
\newblock Cellular decompositions for nested {H}ilbert schemes of points.
\newblock {\em Pacific J. Math.}, 183(1):39--90, 1998.

\bibitem[DH22]{DH}
Yajnaseni Dutta and Daniel Huybrechts.
\newblock Maximal variation of curves on {{\(K3\)}} surfaces.
\newblock {\em Tunis. J. Math.}, 4(3):443--464, 2022.

\bibitem[Fog73]{Fogarty}
J.~Fogarty.
\newblock Algebraic families on an algebraic surface. {II}: {The} {Picard}
  scheme of the punctual {Hilbert} scheme.
\newblock {\em Am. J. Math.}, 95:660--687, 1973.

\bibitem[Fuj83]{Fujita}
Takao Fujita.
\newblock Semipositive line bundles.
\newblock {\em J. Fac. Sci., Univ. Tokyo, Sect. I A}, 30:353--378, 1983.

\bibitem[Ful11]{Fulger}
Mihai Fulger.
\newblock The cones of effective cycles on projective bundles over curves.
\newblock {\em Math. Z.}, 269(1-2):449--459, 2011.

\bibitem[Huy16]{Huybrechts:2016}
Daniel Huybrechts.
\newblock {\em Lectures on {{\(K\)}}3 surfaces}, volume 158 of {\em Camb. Stud.
  Adv. Math.}
\newblock Cambridge: Cambridge University Press, 2016.

\bibitem[Knu01]{Knutsen:2001}
Andreas~Leopold Knutsen.
\newblock On {$k$}th-order embeddings of {$K3$} surfaces and {E}nriques
  surfaces.
\newblock {\em Manuscripta Math.}, 104(2):211--237, 2001.

\bibitem[Kop21]{Kopper:21}
John Kopper.
\newblock The nef cone of the {H}ilbert scheme of points on rational elliptic
  surfaces and the cone conjecture.
\newblock {\em Canad. Math. Bull.}, 64(1):216--227, 2021.

\bibitem[Laz04]{Laz}
Robert Lazarsfeld.
\newblock {\em Positivity in algebraic geometry. {I}. {Classical} setting: line
  bundles and linear series}, volume~48 of {\em Ergeb. Math. Grenzgeb., 3.
  Folge}.
\newblock Berlin: Springer, 2004.

\bibitem[LZ19]{LiZhao19}
Chunyi Li and Xiaolei Zhao.
\newblock Birational models of moduli spaces of coherent sheaves on the
  projective plane.
\newblock {\em Geom. Topol.}, 23(1):347--426, 2019.

\bibitem[Mac14]{Maciocia:2014}
Antony Maciocia.
\newblock Computing the walls associated to {Bridgeland} stability conditions
  on projective surfaces.
\newblock {\em Asian J. Math.}, 18(2):263--280, 2014.

\bibitem[MS17]{Macri-Schmidt:2017}
Emanuele Macr{\`{\i}} and Benjamin Schmidt.
\newblock Lectures on {Bridgeland} stability.
\newblock In {\em Moduli of curves. CIMAT Guanajuato, Mexico 2016. Lecture
  notes of a CIMPA-ICTP school, Guanajuato, Mexico, February 22 -- March 4,
  2016}, pages 139--211. Cham: Springer, 2017.

\bibitem[Nue16]{Nuer}
Howard Nuer.
\newblock Projectivity and birational geometry of {Bridgeland} moduli spaces on
  an {Enriques} surface.
\newblock {\em Proc. Lond. Math. Soc. (3)}, 113(3):345--386, 2016.

\bibitem[NY20]{NuerYoshioka}
Howard Nuer and K{\={o}}ta Yoshioka.
\newblock M{MP} via wall-crossing for moduli spaces of stable sheaves on an
  {E}nriques surface.
\newblock {\em Adv. Math.}, 372:107283, 119, 2020.

\bibitem[Rah24]{Raha2024}
Neelarnab Raha.
\newblock Ample cones of {Hilbert} schemes of points on hypersurfaces in
  {{\(\mathbb{P}^3\)}}.
\newblock {\em Proc. Am. Math. Soc.}, 152(12):5067--5081, 2024.

\bibitem[RY20]{Ryan-Yang:2020}
Tim Ryan and Ruijie Yang.
\newblock Nef cones of nested {H}ilbert schemes of points on surfaces.
\newblock {\em Int. Math. Res. Not. IMRN}, (11):3260--3294, 2020.

\bibitem[Yos01]{Yoshioka01}
K{\={o}}ta Yoshioka.
\newblock Moduli spaces of stable sheaves on abelian surfaces.
\newblock {\em Math. Ann.}, 321(4):817--884, 2001.

\bibitem[Yos16]{Yoshioka16}
K{\={o}}ta Yoshioka.
\newblock Bridgeland's stability and the positive cone of the moduli spaces of
  stable objects on an abelian surface.
\newblock {\em Adv. Stud. Pure Math.}, 69:473--537, 2016.

\bibitem[YY14]{YY}
Shintarou Yanagida and K{\={o}}ta Yoshioka.
\newblock Bridgeland's stabilities on abelian surfaces.
\newblock {\em Math. Z.}, 276(1-2):571--610, 2014.

\end{thebibliography}

\end{document}